\documentclass[a4paper,12pt]{amsart}

\usepackage{amsmath}
\usepackage{amssymb}
\usepackage{amsfonts}


\textheight 22cm \textwidth 15cm \voffset=-0.5cm
\oddsidemargin=0.5cm \evensidemargin=0.5cm \topmargin=-0.5cm




\newtheorem{thm}{Theorem}[section]
\newtheorem{prop}[thm]{Proposition}
\newtheorem{lem}[thm]{Lemma}
\newtheorem{cor}[thm]{Corollary}

\newtheorem{defi}[thm]{Definition}
\newtheorem{exo}{\bf\large Exercice}


\newcommand{\R}{\mathbb{R}}


\def\vec#1{\boldsymbol{#1}}
\newcommand{\I}{\infty}
\newcommand{\as}{\mathbf{A1}}

\newcommand{\dd}{\mathbf{D}}
\newcommand{\bb}{\mathbf{B}}
\newcommand{\bo}{\vec{\omega}}
\newcommand{\bs}{\mathbf{S}}
\newcommand{\II}{\mathbf{I}}
\newcommand{\T}{\mathbf{T}}
\newcommand{\vv}{\mathbf{v}}
\def\bpsi{\vec{\psi}}
\newcommand{\Sum}{\displaystyle \sum}

\newcommand{\Int}{\displaystyle \int}

\newcommand{\Sup}{\displaystyle \sup}


\numberwithin{equation}{section}




\newcommand{\beq}{\begin{eqnarray}}
\newcommand{\eeq}{\end{eqnarray}}
\newcommand{\bq}{\begin{equation}}
\newcommand{\eq}{\end{equation}}
\newcommand{\beqn}{\begin{eqnarray*}}
\newcommand{\eeqn}{\end{eqnarray*}}
\newcommand{\bex}{\begin{exo}}
\newcommand{\eex}{\end{exo}}
\newcommand{\ben}{\begin{enumerate}}
\newcommand{\een}{\end{enumerate}}


\let\b=\beta

\let\de=\delta




\def\cI{{\mathcal I}}
\def\cJ{{\mathcal J}}



\def\na{\nabla}

\def\div{\mathop{\mathrm{div}}\nolimits}



\def\bT{\text{\bf T}}
\def\bc{\text{\bf c}}
\def\bD{\text{\bf D}}
\def\b0{\text{\bf 0}}
\def\bI{\text{\bf I}}
\def\fff{\text{\bf G}}
\def\bQ{\text{\bf Q}}
\def\bH{\text{\bf H}}
\def\bnul{\text{\bf 0}}
\def\tr{\text{tr}}



\author{Miroslav Bul\'{\i}\v{c}ek}
\address{Mathematical Institute of Charles University, Sokolovsk\'{a} 83
186 75 Prague 8, Czech Republic.}
\email{mbul8060@karlin.mff.cuni.cz}
\thanks{Miroslav Bul\'{\i}\v{c}ek is supported by
Ne\v{c}as Center for Mathematical Modeling, project LC06052
financed by M\v{S}MT.}


\author{Mohamed Majdoub}
\address{University of Tunis ElManar, Faculty of Sciences of Tunis,
Department of Mathematics.}
\email{mohamed.majdoub@fst.rnu.tn}
\thanks{M. Majdoub is grateful to the Laboratory of
PDE and Applications at the Faculty of Sciences of Tunis.}


\author{Josef M\'{a}lek}
\address{Mathematical Institute of Charles University, Sokolovsk\'{a} 83
186 75 Prague 8, Czech Republic.}
\email{malek@karlin.mff.cuni.cz}
\thanks{The contribution of Josef M\'{a}lek to this work is a
part of the research project MSM 0021620839 financed by
M\v{S}MT. J. M\'{a}lek thanks the Czech Science Foundation, the
project GA\v{C}R 201/09/0917, for its support.}


\title[Unsteady Flows of Fluids with Pressure Dependent...]
{Unsteady Flows of Fluids with Pressure Dependent Viscosity in Unbounded Domains}

\date{\today}



\begin{document}


\begin{abstract}
In order to describe behavior of various liquid-like materials
at high pressures, incompressible fluid models with pressure
dependent viscosity seem to be a suitable choice. In the
context of implicit constitutive relations involving the Cauchy
stress and the velocity gradient these models are consistent
with standard procedures of continuum mechanics. Understanding
mathematical properties of governing equations is connected
with various types of idealization, some of them lead to
studies in unbounded domains. In this paper, we first bring up
several characteristic features concerning fluids with pressure
dependent viscosity. Then we study three-dimensional flows of a
class of fluids with the viscosity depending on the pressure
and the shear rate. By means of higher differentiability
methods we establish large data existence of a weak solution
for the Cauchy problem. This seems to be a first result that
analyzes flows of considered fluids in unbounded domains. Even
in the context of purely shear rate dependent fluids of a
power-law type the result presented here improves some of
earlier works.
\end{abstract}


\keywords{Pressure dependent viscosity, Shear rate dependent
viscosity, Incompressible fluid, Global existence, Weak solution, Cauchy problem, Galerkin scheme}

\subjclass[2000]{35Q30, 35Q72, 76D03, 76A05}


\maketitle



\section{Introduction}


Navier-Stokes equations, a widely used model describing flows of incompressible Newtonian fluids, are usually expressed in terms of the velocity $\vv=(v^1,v^2,v^3)$ and the pressure $p$ in the form
\begin{equation}
\rho^* (\vv_{,t} + [\nabla \vv]\vv) = -\na p+\nu^{*}\,\Delta\vv,\quad \div\vv=0, \label{I1}
\end{equation}
where $\rho^*$ and $\nu^*$ are positive constants representing the density and the viscosity of a fluid, and
$\vv_{,t} + [\nabla \vv]\vv := \frac{\partial\vv}{\partial t}+\sum_{k=1}^3\,\frac{\partial\vv}{\partial x_k}\,v^k$.

Within the context of continuum mechanics, the system \eqref{I1} appears as a consequence of the constraint of incompressibility
\begin{equation}
\div\vv=0, \label{I2}
\end{equation}
and the balance of linear momentum
\begin{equation}
\rho^* (\vv_{,t} + [\nabla \vv]\vv) =\div\mbox{\bf T},
\label{I3}
\end{equation}
where the Cauchy stress {\bf T} is related to the symmetric part $\dd$ of the velocity gradient $\na\vv$ through the constitutive equation
\begin{equation}
\mbox{\bf T}=-p\, \mbox{\bf I}+2\nu^*\dd. \label{I4}
\end{equation}
Note that due to \eqref{I2} $p$ is the mean normal stress, i.e. $p=-\frac{1}{3}\,\mbox{tr}\,\mbox{\bf T}$. Setting $\bs:=2\nu^* \,\dd $ it is obvious that $\bs$ is the deviatoric part of $\mbox{\bf T}$.\\

Stokes, in his famous paper \cite{stokes} that can be considered as one of the corner-stone of continuum fluid mechanics, carefully discusses the possibility that the viscosity of a fluid may vary with the pressure, see also Hron et al. \cite{hmr00} for details. Barus \cite{barus1893}, and later on Andrade \cite{andrade30} (see also the book by Bridgman \cite{bridgman31}) experimentally showed that the viscosity grows with increasing pressure exponentially. Further details and references to more recent experimental studies can be found in the book by Szeri \cite{szeri98} and in the paper by M\'alek, Rajagopal \cite{handbook2} that reflects the situation before 2006. Recent papers\footnote{Further related experimental studies that concentrates on the dependence of $\nu$ on the pressure and the temperature are presented in Casalini and Bair \cite{casalini} and Harris and Bair \cite{harris}.} by Bair and Kottke \cite{bair03} and by Bair \cite{bair06} report even drastically faster dependence of the viscosity on the pressure. It is worth of noticing that even at such higher pressures the variation in the density of most liquids, in comparison to the variations in the viscosity, is negligible, as discussed in Rajagopal \cite{RajImpl05} or \cite{handbook2}. As a consequence, one can model these liquids as incompressible materials fulfilling the constitutive equation
\begin{equation}
\mbox{\bf T}=-p\, \mbox{\bf I}+2\nu (p) \dd. \label{I8}
\end{equation}
\\

There are many experimental works going back to observations made by Trouton in 1906 \cite{trouton1906} and Schwedoff around 1890 (as discussed in \cite{TannerWalters1998}) that confirm the dependence of the viscosity on the shear rate. A representative list of references can be found in the most of books on non-Newtonian fluids, see Schowalter \cite{schowalter}, Bird, Amstrong, Hassager \cite{bird77}, Huilgol \cite{huilgol} or the survey papers by M\'alek, Rajagopal, R\accent23u{\v{z}}i{\v{c}}ka \cite{MRR95} and M\'alek, Rajagopal \cite{mr05}.\\

There is less available experimental data that characterize the dependence of the viscosity both on the shear rate and on the pressure. This is very likely due to completely different experimental set-up used for the measurement of the relation between the shear rate and the shear stress on the one hand, and for the measurement of the viscosity-pressure relationships on the other hand. Nevertheless, we can provide references where the incompressible fluids with the pressure and shear rate dependent viscosity are chosen in order to model high pressure processes in silos and journal bearings, and where some support of experimental observations to the considered form of the viscosities is available.

Schaeffer \cite{schaeffer87} used  the model \eqref{I2}--\eqref{I4} with
\begin{equation}
\nu\left(p,|\dd|^2\right)=\alpha\,p\,|\dd|^{-1}\qquad (\alpha>0) \label{I5}
\end{equation}
in modeling and analyzing flows of certain granular materials. Davies and Li  \cite{DaviesLi94} and Gwynllyw, Davies and Phillips \cite{GDP96} (see also \cite{LGDP00}) considered a model for computational simulations of pressure and temperature effects in certain viscoelastic liquids, in which the viscosity is of the form
\begin{equation}
\nu\left(p,|\dd|^2\right)
=\Big(\eta_\I+\frac{\eta_0-\eta_\I}{1+\beta\,|\dd|^{2-r}}\Big)\,\exp
(\alpha p) \label{I6}
\end{equation}
with $r=1.46$  and  $\alpha, \beta, \eta_0, \eta_\I
>0$.

To summarize there are experimental data confirming a (linear,
exponential, or even faster) dependence of the viscosity on the
pressure, and also on the shear rate. Thus, neglecting less
significant variations in the density of a liquid at high
pressure one end-up with the model \eqref{I2}, \eqref{I3} and
the constitutive equation
\begin{equation}
\mbox{\bf T}=-p\, \mbox{\bf I} + \nu\left(p,|\dd|^2\right)\,\dd.\label{I7}
\end{equation}
In what follows, we shall show that \eqref{I7} is consistent with standard procedures of continuum mechanics if one starts with a general implicit relation between $\mbox{\bf T}$ and $\dd$.

\subsection{Fluids with pressure dependent viscosities within implicit constitutive theory}


A standard 'derivation' of the constitutive equation for a compressible Navier-Stokes
fluid starts with the assumption that the Cauchy stress $\bT$ in a fluid depends
on the density and the velocity
gradient $\nabla \vv$, i.e.,
\begin{equation}
   \bT = \bH(\varrho, \nabla \vv)\,. \label{Compr1}
\end{equation}
The requirement that $\bT$ is invariant with respect to any coordinate transformation leads to the conclusion (see for example Truesdell \cite{truesdell91} or Serrin \cite{serrin63}) that
$\bH$ depends on the velocity gradient through its symmetric part $\bD(\vv)$ and $\bH$ is an isotropic second order tensor. This means that
$$
\bH (\varrho, Q \bD \bQ^{T})=\bQ \bH(\varrho, \bD) \bQ^{T} \quad \forall \bQ \in  \mathcal{Q}\,,
$$
where $\mathcal{Q}$ denotes the orthogonal group. Using the representation theorem for such isotropic second order tensors one obtains that
$$
\bT= \alpha_1 (\varrho, I_{\bD}, II_{\bD},  III_{\bD})\bI +
\alpha_2(\varrho, I_{\bD}, II_{\bD},  III_{\bD})\bD + \alpha_3(\varrho, I_{\bD}, II_{\bD},  III_{\bD}) \bD^2,
$$
where
$$
   I_{\bD}= \tr \bD, II_{\bD}=  \tr \bD^2, III_{\bD}= \tr \bD^3.
$$
If one requires that the  stress depend linearly on $\bD$, then
one obtains the constitutive equation for the classical compressible Navier-Stokes fluid, namely
\begin{equation}
  \bT = -p(\varrho) \bI + 2 \mu(\varrho) \bD + \lambda (\varrho) (\tr \bD) \bI\,. \label{Compr2}
\end{equation}
This straightforward procedure (that derives the constitutive equation for an compressible Navier-Stokes fluid
starting from the assumption \eqref{Compr1} and followed by some standard requirements) however loses its clarity
if one aims to arrive at an incompressible Navier-Stokes fluid. We recall that the classical approach that is employed in most continuum mechanics textbooks to enforce internal constraints (such as the constraint of incompressibility \eqref{I2}) is to require that the constraints do no work. Then the possibility that the viscosity depends on the pressure is excluded, and one cannot deduce the constitutive equation like \eqref{I8} or \eqref{I7}. We refer to Rajagopal and Srinivasa \cite{RaSi05} for more details and for outlining how to overcome this drawback (within the framework of explicit constitutive equations of the form \eqref{Compr1} in purely mechanical context) by relaxing the requirement that the constraints do no work).

It is thus interesting to observe that the procedure starting with the assumption \eqref{Compr1} and ending with the compressible Navier-Stokes fluids \eqref{Compr2} is also applicable to incompressible Navier-Stokes fluids provided that we start with a general implicit constitutive equation\footnote{Implicit constitutive theory, as introduced in \cite{RajImpl03} and \cite{RajImpl05} (see also \cite{RS08Impl} for further development and \cite{Malek_ETNA} for a reflection of the implicit constitutive theory on mathematical analysis of non-linear PDE's), provides the framework that is sufficiently robust to capture complicated response of materials. In this approach the quantities such as stress and strain in solid-like models and stress and the velocity gradient in fluid-like models share an equivalent role (on contrary to the explicit constitutive theory that usually prefers strain to stress since stress is a function of the strain or the velocity gradient). In addition, the implicit constitutive theory can eliminate some internal variable theories that are frequently connected with less clear physical meaning and with difficulties to identify appropriate boundary conditions.} between $\mbox{\bf T}$ and $\dd$ of the form
\begin{equation}
    \fff(\bT, \bD) = \bnul \,. \label{Incompr1}
\end{equation}
Analogously as above, if we require the
function $\fff$ to be isotropic, then $\fff$ has to satisfy the
restriction
$$
\fff(\varrho, \bQ \bT \bQ^{T}, \bQ \bD \bQ^{T})=\bQ \fff(\varrho, \bT, \bD)\bQ^{T} \quad \forall \bQ \in  \mathcal{Q}\,.
$$
The representation theorem relevant to such isotropic tensors  immediately implies (see Spencer \cite{spencer1971}) that
\begin{equation}
\begin{split}
\alpha_0 \bI + \alpha_1 \bT +\alpha_2 \bD + \alpha_3 \bT^2 + \alpha_4 \bD^2 + \alpha_5(\bD \bT + \bT \bD) + \alpha_6(\bT^2\bD + \bD \bT^2)\\
+\alpha_7 (\bT \bD^2 + \bD^2 \bT) + \alpha_8 (\bT^2 \bD^2 + \bD^2 \bT^2) = \bnul\,,
\end{split} \label{Incompr2}
\end{equation}
where $\alpha_i, i=0, \ldots 8$ depend on the invariants
$$
\tr \bT, \tr \bD, \tr \bT^2, \tr \bD^2, \tr \bT^3, \tr \bD^3, \tr(\bT \bD), \tr (\bT^2 \bD), \tr(\bD^2 \bT), \tr(\bT^2 \bD^2).
$$
We note that if
\begin{align*}
\alpha_0&= - \frac13 \tr \bT,\\
\alpha_1&= 1,
\intertext{and} \alpha_2&= -2 \nu (-\frac13 \tr \bT, |\bD|^2)\qquad (\mu>0),
\end{align*}
we obtain \eqref{I7}.

Also, requiring that \eqref{Incompr2} is linear both in $\bD$ and both in $\bT$, one obtains
$$
  \gamma_0 \, (\tr \bT) \, \bI + \gamma_1 \, (\bD\cdot \bT) \, \bI + \gamma_2 \bT + \gamma_3 \, (\tr \bT) \, \bD + \gamma_4 (\bT \bD + \bD \bT) = \bnul\,,
$$
where $\gamma_i$, $i=0,1,2,3,4$, are constants. Requiring that the constraint of incompressibility holds, i.e. $\tr \bD = 0$, the last identity implies that ($a_i$, $i=1,2,3$, are constants)
$$
  a_1 (\bT - \frac13 \tr \bT \, \bI) + a_2 (\frac13 \tr \bT) \bD + a_3 (\bT\bD + \bD\bT - 2 \bT \cdot \bD) = \bnul\,.
$$
Setting $a_3=0$ we obtain a model where the viscosity depends on $p$ linearly (similarly as in the Schaeffer model \eqref{I5}).

\subsection{On flows in unbounded containers and relevant results established earlier}

Any real flow takes place in a bounded domain. Nevertheless, in order to understand the mathematical properties of the equations that govern fluid motions and to observe how solutions relevant to a material $A$
can differ from solutions of the system of equations describing flow of another material $B$ one frequently comes
across unbounded domains. Flows between two infinite parallel plates (plane Couette-Poiseuille flows), flows in a tube, flows glowing down an inclined plane, flows due to suddenly accelerated plate or flows due to an oscillating plane are examples of motions that are considered in unbounded domains. Despite their unboundedness the shapes of these domains are very similar to containers where properties of flows are experimentally measured. Then the analytical solutions can be compared with experimental data and in most cases the coincidence is very good. Several studies that
aim to obtain some explicit solutions are in place.

Hron et. al. \cite{hmr00} investigate the possibility to find explicit solutions for flows between two infinite parallel plates with no-slip boundary conditions and for the viscosities of two types: (i) $\nu(p) = \exp(\alpha p)$ and (ii) $\nu(p,|\bD|^2) = \alpha \, p\, |\bD|^{r-2}$ for $r\in (1,2\rangle$. (See also relevant study \cite{Susl1} where however some imprecise statements are made, and \cite{Susl2} focused on some stability issues.)
Vasudevaiah and Rajagopal \cite{krraplmat05} considered the fully
developed flow in a pipe dealing with a fluid that has a viscosity that depends on the
pressure and shear rate and were able to obtain explicit exact
solutions for the problem. Kannan and Rajagopal
\cite{KanKRR04} analyzed effects of gravity on flows between rotating parallel plates. For various viscosity-pressure relationships they observed, among others, that a boundary layer can be adjacent to just one of the plate. Massoudi and Phuoc \cite{Massoudi06} considered flows between parallel plates due to oscillatory pressure gradient.
Rajagopal \cite{KRR08} finds special solutions for flows down an inclined plane.
Srinivasan and Rajagopal in \cite{ShriramKRR_IJES} study flow of fluids with pressure dependent viscosity
due to a suddenly accelerated plate and due to an oscillating plate.
In all these flows the velocity field, and consequently also the structure of the vorticity
and the shear stresses at the walls are significantly different from those for the classical Navier-Stokes fluid.

In mathematical analysis, in order to separate the problems connected with
governing equations from those that are due to the presence of the boundary, it is preferable to look for solution either in the whole space\footnote{Leray's fundamental paper \cite{Le34} on long-time and large-data existence theory for the Navier-Stokes equations concerns the Cauchy problem.} or in spatially periodic setting; the former corresponds to the Cauchy (or initial-value) problem. The Cauchy problem for flows of a one class of fluids with pressure and shear rate dependent viscosity is the topic investigated in this paper. Regarding the approach we carry on the
regularity method employed by M\'{a}lek, Ne\v{c}as and Rajagopal
\cite{MNR} for $\bs(p,\dd(\vv))$ and spatially
periodic problem, and on Pokorn\'{y} work \cite{Pok} dealing with
the Cauchy problem for $\bs(\dd(\vv))$, being
independent of $p$. However, we have to strengthen several steps in Pokorn\'y approach in order to make the whole procedure applicable to fluid with pressure and shear rate dependent viscosity. As a consequence, some results presented here seem to be new even for fluids with shear rate dependent viscosity.

Before going to details we survey results that investigate mathematical properties of flows of such fluids
in general, but compact domains.


To date there have been few mathematically rigorous studies
concerning fluids with pressure dependent viscosity. To our
knowledge, there is no global existence theory that is in
place for both steady and unsteady flows of fluids whose viscosity
depends purely on the pressure. Previous studies by Renardy
\cite{renardy86}, Gazzola \cite{gazzola97} and Gazzola and Secchi
\cite{gazzola98} either addressed existence of solutions that are
short-in-time and for small data or assumed structures for the
viscosity that are contradicted by experiments.  Recently, there has
been some resurgence of interest in studying the flows of fluids with pressure
dependent viscosities. M\'{a}lek, Ne\v{c}as and Rajagopal \cite{MaNeRa02}, Hron,
M\'{a}lek, Ne\v{c}as and Rajagopal \cite{hmnr03} and Franta, M\'{a}lek
and Rajagopal \cite{malek04} have established existence results
concerning the flows of fluids whose viscosity depends on both the
pressure and symmetric part of the velocity gradient in an
suitable manner. Franta et al. \cite{malek04} established
the existence of weak solutions for the {\it steady} flows of fluids whose
viscosity depends on both pressure and the symmetric part of the
velocity gradient, that satisfy Dirichlet boundary conditions. This result has been extended to lower values of power-law exponent in \cite{BF09}.
Earlier, M\'{a}lek  et al. \cite{MNR} and Hron et al. \cite{hmnr03}
established global-in-time existence for unsteady flows of such
fluids under spatially periodic boundary conditions. The extension
of these results to flows in bounded domains subject to the Navier's slip are
due to Bul\'\i{}\v{c}ek, M\'{a}lek and Rajagopal \cite{BuMaRa07} and to fully thermodynamical setting
in Bul\'\i{}\v{c}ek, M\'{a}lek and Rajagopal \cite{bmr08A}. There are models where the viscosity goes to $\infty$ as
$p\to \infty$ and the existence of weak solutions can be established, \cite{bmr08B}. The relation of models and results to implicit constitutive theory is discussed in a survey paper \cite{Malek_ETNA}.

\section{Setting of the Problem and the Main Result}


We are interested in studying the following Cauchy problem: to find
$(\vv,p) : [0,T]\times\R^3\to \R^3\times\R$ solving (in a weak sense)
\begin{equation}
\label{eq1} \vv_t+\div(\vv\otimes\vv)-\div(\bs(p,\dd(\vv)))=-\nabla p,\quad\mbox{in}\quad [0,T]\times\R^3
\end{equation}
\begin{equation}
\label{eq2}
\div\vv=0,\quad\mbox{in}\quad[0,T]\times\R^3
\end{equation}
\begin{equation}
\label{CD} \vv(0,\cdot)=\vv_0\quad\mbox{with}\quad\div\vv_0=0,
\end{equation}
where $T$ is the length of the time interval of interest, $v_0$ is
given divergenceless vector field (initial velocity) and $\bs(p,\dd(\vv))$ stands for the constitutively determined part
of the Cauchy stress $\T$ being of the form $\T=-p\II+\bs(p,\dd(\vv))$. Typically, we
consider the case $\bs(p,\dd(\vv))=\nu(p,|\dd(\vv)|^2)\dd(\vv)$, allowing thus the possibility that the
viscosity depends on the pressure $p$ and the quantity $|\dd(\vv)|^2$, that at simple shear flows simplifies to the shear
rate.\\

As regards the structure, we assume that for any $1\leq r\leq 2$
there exists $C_1,\;C_2>0$ such that for all $p\in\R$ and all
symmetric matrices $\bb,\;\dd\in\R^{d\times d}$
\begin{align}
\tag{$\mathbf{A1}$} &C_1(1+|\dd|^2)^{\frac{r-2}{2}}\,|\bb|^2\leq \frac{\partial\bs(p,\dd)} {\partial \dd}\,\cdot\,\left(\bb\otimes\bb\right)\leq C_2(1+|\dd|^2)^{\frac{r-2}{2}}\,|\bb|^2 \label{A1}\\
\intertext{
and there is a $\gamma_0>0$ such that for all $p\in\R$ and all
$\dd\in\R^{d\times d}$}
\tag{$\mathbf{A2}$}&
\Big| \frac{\partial\bs(p,\dd)}{\partial p}\Big|\leq \gamma_0 (1+|\dd|^2)^{\frac{r-2}{4}}\leq\gamma_0.\label{A2}
\end{align}
As a consequence of \eqref{A1}-\eqref{A2} we get
\begin{lem}
\label{ineq-visc}
Let assumption $(\as)$ hold. Then for all $p\in\R$ and $\dd\in \R^{3\times 3}_{sym}$
\begin{equation}
\label{visc1}
\bs(p,\dd) \cdot \dd \geq C_1 |\dd|^2(1+|\dd|^2)^{\frac{r-2}{2}}\ge\frac{C_1}{2} \min (|\dd|^2,|\dd|^r)
\end{equation}
and
\begin{equation}
\label{visc2}
\Big|\bs(p,\dd) \Big|\leq C_2|\dd|(1+|\dd|^2)^{\frac{r-2}{2}}\le C_2
\min(|\dd|,|\dd|^{r-1}).
\end{equation}
\end{lem}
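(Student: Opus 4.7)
The plan is to integrate assumption $(\as)$ along the segment from $\bnul$ to $\dd$, relying on the natural normalization $\bs(p,\bnul)=\bnul$ that is implicit in a purely viscous stress. Writing
\[
  \bs(p,\dd) = \int_0^1 \frac{d}{dt}\bs(p,t\dd)\,dt = \int_0^1 \frac{\partial \bs}{\partial \dd}(p,t\dd)\cdot \dd\, dt,
\]
the lower estimate in \eqref{visc1} follows by contracting with $\dd$ and applying the left inequality of \eqref{A1}:
\[
  \bs(p,\dd)\cdot \dd = \int_0^1 \frac{\partial\bs}{\partial\dd}(p,t\dd)\cdot(\dd\otimes\dd)\,dt \ge C_1 |\dd|^2 \int_0^1 (1+t^2|\dd|^2)^{\frac{r-2}{2}}\,dt.
\]
Because $r\le 2$ the map $t\mapsto (1+t^2|\dd|^2)^{(r-2)/2}$ is nonincreasing on $[0,1]$, so bounding it from below by its value at $t=1$ delivers the first inequality of \eqref{visc1}.

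The bound involving $\min(|\dd|^2,|\dd|^r)$ is then a short case analysis. If $|\dd|\le 1$ then $\min(|\dd|^2,|\dd|^r)=|\dd|^2$ (since $r\le 2$ forces $|\dd|^r\ge|\dd|^2$ on the unit ball) and $(1+|\dd|^2)^{(r-2)/2}\ge 2^{(r-2)/2}\ge 1/2$ because $r\ge 1$. If $|\dd|>1$ then $\min=|\dd|^r$, and combining $1+|\dd|^2\le 2|\dd|^2$ with $r-2\le 0$ yields $(1+|\dd|^2)^{(r-2)/2}\ge 2^{(r-2)/2}|\dd|^{r-2}\ge \tfrac{1}{2}|\dd|^{r-2}$, which is what is needed.

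For \eqref{visc2} I would use the same integral representation but test against the unit direction $\bb:=\bs(p,\dd)/|\bs(p,\dd)|$ and apply the Cauchy--Schwarz inequality for the positive-definite bilinear form on symmetric matrices induced by $\partial\bs/\partial\dd$; combined with the right inequality of \eqref{A1} this yields
\[
  |\bs(p,\dd)| \le C_2\,|\dd|\int_0^1 (1+t^2|\dd|^2)^{(r-2)/2}\,dt.
\]
The substitution $u=t|\dd|$ and splitting the resulting integral at $u=1$ bound it by $C(r)(1+|\dd|^2)^{(r-2)/2}$ for $r>1$. The subsidiary bound $|\dd|(1+|\dd|^2)^{(r-2)/2}\le\min(|\dd|,|\dd|^{r-1})$ is then immediate from the same case split, using only that $(r-2)/2\le 0$.

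The main obstacle is more conceptual than computational: \eqref{A1} controls only the quadratic form of the fourth-order tensor $\partial\bs/\partial\dd$, and to convert this into an operator-style bound on the matrix $(\partial\bs/\partial\dd)\cdot\dd$ used for \eqref{visc2} one must invoke its natural major symmetry in the pair $(ij,kl)$. The borderline value $r=1$ is genuinely tight, since the integral above just barely survives and would otherwise pick up a spurious logarithmic factor.
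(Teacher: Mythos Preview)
Your argument is sound and actually supplies the details that the paper omits: the paper's own proof is a one-line citation to Lemma~1.19 in \cite{MNRR}, whose argument is precisely the integration along the segment from $\bnul$ to $\dd$ that you carry out (together with the normalization $\bs(p,\bnul)=\bnul$). So there is no genuinely different approach to compare against.

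Two remarks on \eqref{visc2}. First, the passage from the quadratic-form upper bound in \eqref{A1} to a bound on $\big|\tfrac{\partial\bs}{\partial\dd}\cdot\dd\big|$ does require the major symmetry you flag; this holds for the prototypical models $\bs=\nu(p,|\dd|^2)\dd$ but is not a formal consequence of \eqref{A1} as written. Second, your integral estimate yields $|\bs(p,\dd)|\le C(r)\,C_2\,|\dd|(1+|\dd|^2)^{(r-2)/2}$ with $C(r)\sim(r-1)^{-1}$, rather than the bare constant $C_2$ appearing in the statement. This is harmless for every subsequent use in the paper: \eqref{visc2} is invoked only to place $\bs$ in the correct Lebesgue space, while the sharp constant $C_2$ that enters the smallness condition $\gamma_0<C_1/(C_1+C_2)$ comes directly from \eqref{A1} through the estimate \eqref{gradp}, not via \eqref{visc2}. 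Your observation about the logarithmic degeneration at $r=1$ is accurate and shows that endpoint is genuinely delicate for the segment-integration method.
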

\begin{proof} See Lemma 1.19 p. 198 in \cite{MNRR}.
\end{proof}
The aim of this paper is to establish the following existence result.
\begin{thm}
\label{MAIN} Let $\vv_0\in L^{2}_{div}$ and $\bs$ fulfil
the assumptions \eqref{A1}--\eqref{A2}  with $\gamma_0<\frac{C_1}{C_1+C_2}$. Assume that
$r\in(\frac{9}{5},2)$ then there is a couple $(\vv,p)$ such that
\begin{align}
\vv&\in L^{\infty}(0,T;L^2(\R^3)^3) \cap L^r(0,T; D^{1,\phi}(\R^3)^3)\cap W^{1,\frac{5r}{6}}(0,T; W^{-2,2}(\R^3)^3),\label{THEq1}\\
\intertext{with $\phi(s):=s^2(1+s^2)^{\frac{r-2}{2}}$ and}
p_1& \in L^q(0,T;L^q(\R^3)), \quad \textrm{ for all } q\in (1,\frac{5r}{6}),\label{THEq2}\\
p_2 &\in L^s(0,T;L^s(\R^3)), \quad \textrm{ for all } s\in [2,r'], \label{THEq3}
\end{align}
where $p$ is given as $p=p_1+p_2$, and that solves
for all $\bpsi\in L^{\frac{5r}{5r-6}}(0,T;W^{2,2}(\R^3))$
\begin{equation}
\Int_0^T\langle \vv_t,\bpsi\rangle-(\vv \otimes \vv, \nabla \bpsi)+(\bs(p,\dd(\vv)),\dd(\bpsi)) \; dt
=\Int_0^T\,(p,\div\bpsi)\; dt.
\label{weak21}
\end{equation}
\end{thm}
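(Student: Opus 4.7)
The plan is a Galerkin approximation combined with a contraction-based reconstruction of the pressure via Riesz transforms and a higher-differentiability argument in the spirit of M\'alek--Ne\v{c}as--Rajagopal \cite{MNR} and Pokorn\'y \cite{Pok}, adapted to the whole space and to pressure-dependent viscosity. First I would build approximations $\vv^n=\sum_{j=1}^n c_j^n(t)\bo_j$ using a basis $\{\bo_j\}$ of divergence-free $C_c^\infty(\R^3)^3$ functions (for instance, Stokes eigenfunctions on expanding balls extended by zero). Since $\bs$ depends on $p$, the pressure cannot be eliminated by the Leray projection; instead, taking the divergence of \eqref{eq1} yields
\begin{equation*}
  -\Delta p^n = \partial_i\partial_j(v^{n,i}v^{n,j}) - \partial_i\partial_j S^{ij}(p^n,\dd(\vv^n)),
\end{equation*}
so that, denoting by $\cR_i$ the $i$-th Riesz transform,
\begin{equation*}
  p^n = \cR_i\cR_j(v^{n,i}v^{n,j}) - \cR_i\cR_j S^{ij}(p^n,\dd(\vv^n)) =: p_1^n + p_2^n.
\end{equation*}
As $\cR_i\cR_j$ is an $L^2$ isometry and $|\partial_p \bs|\le\gamma_0$ by \eqref{A2}, the hypothesis $\gamma_0<C_1/(C_1+C_2)$ makes $p\mapsto -\cR_i\cR_j S^{ij}(p,\dd(\vv^n))$ a strict contraction in $L^2(\R^3)$; hence $p^n$ is uniquely determined by $\vv^n$, depends continuously on it, and the Galerkin ODE is well posed.

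Testing the $n$-th equation with $\vv^n$ and invoking \eqref{visc1} produces the uniform bounds in \eqref{THEq1}. Interpolation of $L^\infty(0,T;L^2)$ with $L^r(0,T;L^{3r/(3-r)})$ (Sobolev embedding of $W^{1,r}(\R^3)$) then gives $\vv^n\otimes\vv^n\in L^{5r/6}((0,T)\times\R^3)$, whence Calder\'on--Zygmund theory furnishes $p_1^n\in L^q((0,T)\times\R^3)$ for every $q\in(1,5r/6)$; the bound for $p_2^n$ in \eqref{THEq3} follows from \eqref{visc2} together with the contraction argument extended to general $L^s$ for $s\in[2,r']$.

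The main obstacle is the identification of the weak limit of the non-monotone nonlinearity $\bs(p^n,\dd(\vv^n))$, for which one needs pointwise convergence of $\dd(\vv^n)$. I would obtain this by an interior higher-differentiability estimate. Applying the finite-difference operator $\tri^h_k f(x):=h^{-1}(f(x+he_k)-f(x))$ to the momentum equation and testing with $\tri^h_k\vv^n$, the viscous term yields via \eqref{A1} the coercive contribution
\begin{equation*}
c\int_0^T\!\!\int_{\R^3}(1+|\dd(\vv^n)|^2)^{\frac{r-2}{2}}|\tri^h_k\dd(\vv^n)|^2\,dx\,dt,
\end{equation*}
while the $\nabla p$ term generates a commutator that is controlled through the representation of $p^n$ above and \eqref{A2}; the smallness $\gamma_0<C_1/(C_1+C_2)$ is precisely what permits its absorption on the left. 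The convective contribution is handled by the interpolation bounds of the previous paragraph, which is where the lower bound $r>9/5$ (the Pokorn\'y threshold) enters. Sending $h\to0$ gives a uniform bound on $(1+|\dd(\vv^n)|^2)^{(r-2)/4}\nabla\dd(\vv^n)$ in $L^2_{loc}$.

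Feeding the spatial estimates back into \eqref{eq1} one obtains $\vv^n_t\in L^{5r/6}(0,T;W^{-2,2})$, whose temporal conjugate is exactly the exponent $5r/(5r-6)$ of the test class in \eqref{weak21}. Aubin--Lions then gives strong convergence $\vv^n\to\vv$ in $L^2_{loc}$, and the higher differentiability promotes this, along a further subsequence, to $\dd(\vv^n)\to\dd(\vv)$ almost everywhere. Combined with Vitali's theorem, \eqref{visc2} and the continuity of $\bs$, this identifies the weak limit of $\bs(p^n,\dd(\vv^n))$ as $\bs(p,\dd(\vv))$, while linearity of the Riesz transform and the contraction estimate yield convergence of the pressure components in the spaces of \eqref{THEq2}--\eqref{THEq3}. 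Passing to the limit in \eqref{weak21} then completes the proof; the hardest single step is the higher-differentiability estimate, where the smallness condition on $\gamma_0$ and the lower bound $r>9/5$ are genuinely used.
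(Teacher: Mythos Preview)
Your outline captures the overall architecture correctly—the contraction argument for the pressure, the energy bounds, the decomposition $p=p_1+p_2$, and the need for higher differentiability to get pointwise convergence of $\dd(\vv^n)$—but there is a structural gap at the core step. In a one-level Galerkin scheme on $\R^3$, the difference quotient $\tri^h_k\vv^n$ is \emph{not} an admissible test function: the Galerkin system holds only against $\bo_1,\dots,\bo_n$, and no finite-dimensional subspace of $\mathcal{D}_{\div}(\R^3)^3$ is invariant under translation (this is precisely where the whole space differs from the periodic case of \cite{MNR}, whose Fourier basis makes $-\Delta\vv^n$ a legitimate Galerkin test function; the paper notes explicitly that on $\R^3$ the Laplacian has no eigenfunctions, so that device is unavailable). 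Nor can you run the difference-quotient argument \emph{after} $n\to\infty$: until the nonlinearity $\bs(p,\dd(\vv))$ is identified there is no equation to differentiate, and identifying it is exactly what the higher differentiability is supposed to deliver. The paper breaks this circularity with a two-level approximation: it adds a regularizing term $\delta\Delta^2\vv$, first sends $N\to\infty$ for fixed $\delta$ (the bi-Laplacian then yields $\vv^\delta\in L^2(0,T;W^{4,2})$, so $\bpsi=-\Delta\vv^\delta$ becomes a legitimate test function in the \emph{continuous} $\delta$-equation \eqref{weak2}), performs the higher-differentiability estimate uniformly in $\delta$, and only then lets $\delta\to0$.

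A second, more technical, point: testing with $-\Delta\vv^\delta$ does not yield the uniform $L^2$-in-time bound on $\II_r(\vv^\delta):=\int_{\R^3}(1+|\dd(\vv^\delta)|^2)^{(r-2)/2}|\dd(\nabla\vv^\delta)|^2\,dx$ that your sketch asserts. After absorbing the pressure via \eqref{A2} and $\gamma_0<C_1/(C_1+C_2)$, the surviving right-hand side $\|\dd(\vv^\delta)\|_3^3+\|\dd(\vv^\delta)\|_3^2\|\nabla\vv^\delta\|_2^2$ has to be interpolated by splitting into $\{|\dd|\le1\}$ and $\{|\dd|>1\}$ (forced by the Orlicz behaviour of $\phi$, quadratic near zero and $s^r$ at infinity), and one obtains only the \emph{weighted} bound $\int_0^T\II_r(\vv^\delta)(1+\|\nabla\vv^\delta\|_2^2)^{-\lambda}\,dt\le C$ with $\lambda=\frac{2(3-r)}{3r-5}$; the restriction $r>\tfrac95$ enters in this interpolation. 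A further argument is then needed to extract $\int_0^T(\II_r(\vv^\delta))^{\gamma}\,dt\le C$ for some $\gamma\in(0,1)$, and hence a local $W^{1+\sigma,r}$ bound (for small $\sigma>0$) that finally gives compactness of $\dd(\vv^\delta)$.
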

Note that all spaces used in the previous theorem are introduced in the next section.\\

Nowadays, there are several results concerning the model in
investigation. All of them are treating flows on bounded domains.
The novelty in our study is that we consider unbounded domain
(especially $\R^3$). Regarding the approach we carry on the
regularity method employed by M\'{a}lek, Ne\v{c}as and Rajagopal
\cite{MNR} for $\bs(p,\dd(\vv))$ and spatially
periodic problem, and on Pokorn\'{y} work \cite{Pok} dealing with
the Cauchy problem for $\bs(\dd(\vv))$, being
independent of $p$.


\section{Basic definitions and auxiliary lemmas}


In this section we will fix the notations, state the basic
definitions and recall some known and useful tools. In order to distinguish between scalar-, vector- and tensor-valued function we denote by bold symbol  the vector-valued function, i.e. $\vv:=(v^1,v^2,v^3)$, and capital bold symbol for tensor valued function, i.e., $(\bs)_{ij}:= S_{ij}$ for $i,j=1,2,3$. Let $\phi: \mathbb{R}_+ \to \mathbb{R}_+$ be an increasing continuous convex function that  vanishes at zero. For arbitrary open $\Omega \subset \mathbb{R}^3$ we denote by the symbol $L^{\phi}(\Omega)$ the Banach space
$$
L^{\phi}(\Omega):=\overline{\mathcal{D}(\Omega)}^{\|\cdot \|_{L^\phi}} \quad \textrm{ with } \quad \|v\|_{L^\phi}=\inf \left\{\lambda >0; \int_{\Omega} \phi\left(\frac{|v|}{\lambda}\right) \; dx \le 1 \right\}.
$$
Note that if $\phi(s)=s^r$ with some $r\in [1,\infty)$, we write $L^r(\Omega):=L^{\phi}(\Omega)$ and this definition corresponds to the standard one for Lebesgue spaces. The space $L^{\infty}(\Omega)$ is defined by usual way. The Sobolev spaces $W^{k,p}(\Omega)$ are defined through usual way. In addition, for arbitrary $k\in \mathbb{N}_0$ and arbitrary $\phi$ satisfying the condition mentioned above we define the space $D^{k,\phi}(\Omega)$ as
$$
D^{k,\phi}(\Omega):= \overline{\mathcal{D}(\Omega)}^{\|\cdot \|_{D^{k,\phi}}} \quad \textrm{ with } \quad \|v\|_{D^{k,\phi}}:=\|\nabla^k v\|_{L^\phi}.
$$
Similarly as above, the symbol $D^{k,r}(\Omega)$ with $r\in [1,\infty)$ denotes the space $D^{k,\phi}(\Omega)$ with $\phi(s):=s^r$. Since we work with solenoidal functions, we also denote
$$
W^{k,r}_{div}(\Omega):=\{\vv \in W^{k,r}(\Omega)^3; \div v =0\},
$$
where we used the notation $X^m:=\underset{m-\textrm{times}}{\underbrace{X\times \cdots \times X}}$. As usually, for a Banach space $X$, the symbols $L^r(0,T;
X)$ and ${\mathcal C}(0,T;X)$ stand for the standard Bochner
spaces. Finally, if $a\in L^r(\mathbb{R}^3)$ and $b\in L^{r'}(\mathbb{R}^3)$, we define $(a,b):=\int_{\mathbb{R}^3} \,a(x)\,b(x)\;dx$, and in addition for general Banach space $X$ we also set $\langle a,b\rangle:=\langle a,b\rangle_{X, X^*}$ whenever $a\in X$ and $b\in X^*$ and whenever the meaning of the duality is clear. We should note that in the rest of the paper the functions $\phi,\;\psi:\mathbb{R}_+ \to \mathbb{R}_+$ are defined as $\phi(s):=s^2(1+s^2)^{\frac{r-2}{2}}$ and $\psi(s):=\min(s^6,s^\frac{3r}{3-r})$.

In addition we recall all auxiliary lemmas needed in the following text. First, since we will need to deal with Orlicz spaces, we will frequently use the embedding theorem.
\begin{lem}\label{LM3.1}
 There there exists a constant $C>0$ such that
\begin{equation}
\|v\|_{L^\psi}\le C \|\nabla v\|_{L^\phi}
\label{emb}
\end{equation}
for all $v\in D^{1,\phi}(\mathbb{R}^3)$.
\end{lem}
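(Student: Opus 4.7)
The function $\phi(s)=s^{2}(1+s^{2})^{(r-2)/2}$ behaves like $s^{2}$ on $[0,1]$ and like $s^{r}$ on $[1,\infty)$ (with constants depending only on $r$), while $\psi(s)=\min(s^{6},s^{3r/(3-r)})$ is built out of the Sobolev conjugates $2^{*}=6$ and $r^{*}=3r/(3-r)$ in three dimensions. My plan is therefore to split the gradient of $v$ into a ``small-values'' part, which will sit in $L^{2}$, and a ``large-values'' part, which will sit in $L^{r}$, reconstruct $v$ from these pieces via a Riesz potential, and combine the resulting two Sobolev inequalities through the Luxemburg gauge of $\psi$.

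Using the positive homogeneity of both $\|\cdot\|_{L^{\phi}}$ and $\|\cdot\|_{L^{\psi}}$, I reduce \eqref{emb} to the normalized claim: if $\int_{\R^{3}}\phi(|\nabla w|)\,dx\le 1$, then $\|w\|_{L^{\psi}}\le C$. Decompose $|\nabla w|=g_{1}+g_{2}$ with $g_{1}:=|\nabla w|\mathbf{1}_{\{|\nabla w|\le 1\}}$ and $g_{2}:=|\nabla w|\mathbf{1}_{\{|\nabla w|>1\}}$; the elementary lower bounds $\phi(s)\ge c\,s^{2}$ on $[0,1]$ and $\phi(s)\ge c\,s^{r}$ on $[1,\infty)$, both direct from the formula for $\phi$, yield
\[
\int_{\R^{3}} g_{1}^{2}\,dx \;+\; \int_{\R^{3}} g_{2}^{r}\,dx \;\le\; C\int_{\R^{3}}\phi(|\nabla w|)\,dx\;\le\; C.
\]

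For $w\in\mathcal{D}(\R^{3})$, integration along rays from $x$ yields the classical pointwise representation $|w(x)|\le C\int_{\R^{3}}|\nabla w(y)|\,|x-y|^{-2}\,dy=C\,I_{1}(|\nabla w|)(x)$; this extends to $D^{1,\phi}(\R^{3})$ by the very density of $\mathcal{D}(\R^{3})$ used to define that space. Hence $|w|\le C(I_{1}g_{1}+I_{1}g_{2})$ almost everywhere, and the Hardy--Littlewood--Sobolev inequality (applicable since $1<r<3$) gives $\|I_{1}g_{1}\|_{L^{6}}\le C\|g_{1}\|_{L^{2}}\le C$ and $\|I_{1}g_{2}\|_{L^{3r/(3-r)}}\le C\|g_{2}\|_{L^{r}}\le C$.

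Finally, to package this into the Orlicz norm, partition $\R^{3}=\{I_{1}g_{1}\ge I_{1}g_{2}\}\cup\{I_{1}g_{1}< I_{1}g_{2}\}$: on the first set $|w|\le 2C\,I_{1}g_{1}$, on the second $|w|\le 2C\,I_{1}g_{2}$. Using the two one-sided bounds $\psi(s)\le s^{6}$ and $\psi(s)\le s^{3r/(3-r)}$ (both valid for all $s\ge 0$ by the very definition of $\psi$), one obtains for every $\lambda>0$
\[
\int_{\R^{3}}\psi(|w|/\lambda)\,dx\;\le\; \frac{c_{1}\,\|I_{1}g_{1}\|_{L^{6}}^{6}}{\lambda^{6}}+\frac{c_{2}\,\|I_{1}g_{2}\|_{L^{3r/(3-r)}}^{3r/(3-r)}}{\lambda^{3r/(3-r)}}\;\le\; \frac{c_{3}}{\lambda^{6}}+\frac{c_{4}}{\lambda^{3r/(3-r)}},
\]
which drops below $1$ once $\lambda$ exceeds some universal $\Lambda_{0}$, so that $\|w\|_{L^{\psi}}\le\Lambda_{0}$ and the homogeneity reduction recovers \eqref{emb}. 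The main subtlety is precisely this last packaging step: since $\psi$ is neither homogeneous nor convex (because $3r/(3-r)<6$ for $r<2$), one cannot invoke subadditivity or scaling of a genuine Luxemburg norm, and splitting the underlying domain according to which component of the pointwise decomposition dominates is what keeps the bookkeeping honest.
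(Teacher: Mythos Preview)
Your argument is correct and self-contained, whereas the paper does not actually prove this lemma: it simply invokes Cianchi's general optimal Orlicz--Sobolev embedding theorem \cite[Theorem~1]{Ci04} as a black box. Your route instead exploits the concrete structure of $\phi$ and $\psi$ as piecewise power functions: splitting $|\nabla w|$ at the level set $\{|\nabla w|=1\}$ places the two pieces in $L^{2}$ and $L^{r}$ respectively, Hardy--Littlewood--Sobolev handles each via the Riesz potential, and partitioning the domain according to which piece dominates lets you apply whichever of the two universal upper bounds $\psi(s)\le s^{6}$, $\psi(s)\le s^{3r/(3-r)}$ is convenient. This is more elementary than quoting the general theory and makes transparent why precisely the Sobolev conjugates $2^{*}=6$ and $r^{*}=3r/(3-r)$ show up in $\psi$; the trade-off is that the argument is tailored to these specific $\phi,\psi$ and does not generalize. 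One minor wording issue: in your closing remark you say one cannot invoke ``scaling of a genuine Luxemburg norm'' because $\psi$ is non-convex, but the Luxemburg gauge $\|\cdot\|_{L^{\psi}}$ is positively homogeneous for \emph{any} increasing $\psi$---indeed you used exactly this in your normalization step. What fails for non-convex $\psi$ is only the triangle inequality, which your argument never needs.
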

\begin{proof}
The proof for more general $\phi$ can be found in \cite[Theorem 1, p. 432]{Ci04}.
\end{proof}
The next lemma is in fact the Korn inequality in some Orlicz spaces.
\begin{lem}[Korn inequality]\label{LKorn}
Let $r\in (\frac{6}{5},2)$. There exists a constant $C>0$ such that for all $\vv \in D^{1,\phi}(\mathbb{R}^3)^3$ there holds
\begin{equation}
\|\nabla v\|_{L^\phi}\le C \|\ \dd (\vv) \|_{L^\phi}.
\label{Korn}
\end{equation}
\end{lem}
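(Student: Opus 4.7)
\medskip

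\noindent\textbf{Plan of proof.} By the very definition of $D^{1,\phi}(\mathbb{R}^3)$ as the closure of $\mathcal{D}(\mathbb{R}^3)$ in the norm $\|\nabla\cdot\|_{L^\phi}$, it suffices to establish \eqref{Korn} for $\vv\in\mathcal{D}(\mathbb{R}^3)^3$ and then pass to the limit. The strategy is the classical one: represent $\nabla\vv$ as the image of $\dd(\vv)$ under a Calder\'on--Zygmund singular integral operator, and invoke the boundedness of such operators on the Orlicz space $L^\phi(\mathbb{R}^3)$.

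\medskip

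\noindent Starting from $2\,D_{ij}(\vv)=\partial_i v^j+\partial_j v^i$, a direct computation yields, for $\vv\in\mathcal{D}(\mathbb{R}^3)^3$,
\begin{equation*}
\Delta v^k=2\,\partial_i D_{ik}(\vv)-\partial_k\,\tr\dd(\vv).
\end{equation*}
Inverting the Laplacian via the Newtonian kernel and then differentiating, one obtains
\begin{equation*}
\partial_j v^k=-2\,R_j R_i\,D_{ik}(\vv)+R_j R_k\,\tr\dd(\vv),
\end{equation*}
where $R_i=\partial_i(-\Delta)^{-1/2}$ are the classical Riesz transforms on $\mathbb{R}^3$ and the summation convention is in force. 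The operators $R_jR_i$ are Calder\'on--Zygmund operators of convolution type, so the whole map $\dd(\vv)\mapsto\nabla\vv$ is a matrix--valued Calder\'on--Zygmund operator.

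\medskip

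\noindent The final step is to show that Calder\'on--Zygmund operators are bounded on $L^\phi(\mathbb{R}^3)$. Since $\phi(s)=s^2(1+s^2)^{(r-2)/2}$ with $r\in(\tfrac{6}{5},2)$ behaves like $s^2$ near the origin and like $s^r$ near infinity, $\phi$ is a genuine N--function satisfying the doubling condition $\Delta_2$; the complementary Young function $\phi^*$ behaves like $s^2$ near zero and like $s^{r'}$ at infinity and hence also lies in $\Delta_2$ (equivalently $\phi\in\nabla_2$). Consequently the Boyd indices of $L^\phi$ lie strictly between $1$ and $\infty$, so the extrapolation/interpolation theory of Calder\'on--Zygmund operators on Orlicz spaces (see, e.g., the treatment in \cite{MNRR} or Krasnosel'skii--Rutitskii) yields the uniform bound $\|R_jR_i f\|_{L^\phi}\le C\|f\|_{L^\phi}$. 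Combining this with the representation of $\nabla\vv$ in the previous paragraph gives \eqref{Korn} for smooth compactly supported $\vv$, and density concludes the argument.

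\medskip

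\noindent\textbf{Main obstacle.} The only non-routine point is the boundedness of second-order Riesz transforms on the \emph{non-power} Orlicz space $L^\phi$: the transition of $\phi$ between the two power regimes $s^2$ and $s^r$ prevents a direct reduction to a single $L^p$ space, and one must appeal to the more delicate Orlicz-space Calder\'on--Zygmund theory. The lower bound $r>\tfrac{6}{5}$ is compatible with (and in fact stronger than) the condition $r>1$ that is actually needed to keep both $\phi$ and $\phi^*$ in $\Delta_2$, so no sharper analysis of the exponent is required.
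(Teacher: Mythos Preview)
Your proposal is correct and aligns with the paper's own approach. The paper does not give an independent argument but simply observes that $\phi$ and its complementary function both satisfy the $\Delta_2$ condition and then invokes \cite[Theorem~6.4]{DiRuSc08}; your sketch (Riesz-transform representation of $\nabla\vv$ in terms of $\dd(\vv)$ plus Calder\'on--Zygmund boundedness on Orlicz spaces with $\phi,\phi^*\in\Delta_2$) is exactly the mechanism underlying that cited theorem, so you have effectively unpacked the reference rather than taken a different route.
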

\begin{proof}
Since $\phi$ is $\mathcal{C}^1$ convex functions which satisfies $\triangle_2$ condition and its complementary\footnote{The definition of complementary function and also of $\triangle_2$ condition can be found eg. in \cite{DiRuSc08}. Note that our $\phi$ satisfies all the assumptions of the Theorem 6.4 in \cite{DiRuSc08} because $1<r<\infty$.} function as well, the inequality \eqref{Korn} is a consequence of \cite[Theorem 6.4]{DiRuSc08}.
\end{proof}
As a simple consequence of \eqref{emb} and \eqref{Korn} one can deduce the following
\begin{cor}
 Assume that $q\in [2,\frac{5r}{3}]$. There is a constant $C$ such that for arbitrary $\vv$ satisfying
$$
\|\vv\|_{L^{\infty}(0,T;L^2(\R^3)^3)} + \|\dd (\vv)\|_{L^r(0,T;L^{\phi}(\R^3)^{3\times 3})} \le K
$$
there holds
\begin{equation}
\|\vv\|_{L^q(0,T;L^q(\R^3)^3)}\le CK. \label{INT}
\end{equation}
\end{cor}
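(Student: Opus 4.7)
The plan is to combine a pointwise-in-time Orlicz--Sobolev estimate with the uniform $L^{2}$ bound via a truncation/interpolation argument, and then to close the time integral by H\"older's inequality. To that end, write $V(t):=\|\vv(t)\|_{L^{2}}$ and $M(t):=\|\vv(t)\|_{L^{\psi}}$. Applying Lemma~\ref{LKorn} followed by Lemma~\ref{LM3.1} at almost every $t\in(0,T)$ immediately gives
\[
   M(t)\;\le\;C\,\|\nabla\vv(t)\|_{L^{\phi}}\;\le\;C\,\|\dd(\vv)(t)\|_{L^{\phi}},
\]
so that under the standing hypothesis $\|V\|_{L^{\infty}(0,T)}\le K$ and $\|M\|_{L^{r}(0,T)}\le CK$.

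The heart of the argument is the pointwise interpolation
\[
   \|\vv(t)\|_{L^{q}}^{q}\;\le\;C\bigl(V(t)^{q-\alpha}M(t)^{\alpha}+V(t)^{2}M(t)^{q-2}\bigr),\qquad \alpha:=\frac{3r(q-2)}{5r-6},
\]
which I would obtain by splitting $\int_{\R^{3}}|\vv|^{q}\,dx$ at a threshold $\Lambda\ge M(t)$. On $\{|\vv|\le\Lambda\}$ one has $|\vv|^{q}\le\Lambda^{q-2}|\vv|^{2}$ (because $q\ge 2$), contributing $\le\Lambda^{q-2}V^{2}$. On $\{|\vv|>\Lambda\}$ the ratio $|\vv|/M$ exceeds $1$, hence $\psi(|\vv|/M)=(|\vv|/M)^{3r/(3-r)}$; combined with the modular inequality $\int\psi(|\vv|/M)\,dx\le 1$ built into the definition of $\|\cdot\|_{L^{\psi}}$, this gives $\int_{\{|\vv|>\Lambda\}}|\vv|^{3r/(3-r)}\,dx\le M^{3r/(3-r)}$, and since $q\le 5r/3\le 3r/(3-r)$ (using $r>6/5$) the pointwise bound $|\vv|^{q}\le\Lambda^{q-3r/(3-r)}|\vv|^{3r/(3-r)}$ is available. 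Optimising $\Lambda^{q-2}V^{2}+\Lambda^{q-3r/(3-r)}M^{3r/(3-r)}$ over $\Lambda\ge M$ produces the first term on the right-hand side when the interior critical point is admissible (the regime $M\ge V$) and the boundary choice $\Lambda=M$ produces the second term otherwise.

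Once the pointwise bound is in hand, the time integration is routine. Both exponents $\alpha$ and $q-2$ lie in $[0,r]$ for every $q\in[2,5r/3]$ (the endpoint $q=5r/3$ forces $\alpha=r$), so H\"older's inequality in $t$ gives
\[
   \int_{0}^{T}\!V^{q-\alpha}M^{\alpha}\,dt+\int_{0}^{T}\!V^{2}M^{q-2}\,dt\;\le\;C(T,q,r)\,K^{q},
\]
and taking $q$-th roots yields \eqref{INT}. The only genuinely delicate step is the pointwise interpolation: because the spatial control on $\vv$ sits in the Orlicz space $L^{\psi}$ rather than in a single Lebesgue space, the familiar Gagliardo--Nirenberg interpolation between $L^{2}$ and $L^{3r/(3-r)}$ is not directly applicable, and the threshold decomposition is the natural substitute. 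It succeeds precisely because $\psi(s)=s^{3r/(3-r)}$ in its large-argument regime, which is where the $L^{q}$-mass of interest is concentrated, while the small-argument regime of $\psi$ is compensated by the $L^{2}$ bound.
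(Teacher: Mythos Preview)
Your argument is correct and shares the paper's core strategy: split $\int_{\R^3}|\vv|^q\,dx$ at a threshold tied to $\|\vv\|_{L^\psi}$, exploit the large-argument regime $\psi(s)=s^{3r/(3-r)}$ above the cut, and finish with H\"older in time. The one genuine difference lies in how the sub-threshold piece is handled. The paper fixes the cut at $\|\vv\|_{L^\psi}$ and on $\{|\vv|\le\|\vv\|_{L^\psi}\}$ interpolates between $L^2$ and $L^6$ via H\"older, thereby also using the small-argument regime $\psi(s)=s^6$; you instead use only the crude bound $|\vv|^q\le\Lambda^{q-2}|\vv|^2$ and recover sharpness by optimising the threshold $\Lambda\ge\|\vv\|_{L^\psi}$. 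Your route is marginally more elementary in that only one regime of $\psi$ is ever invoked, at the cost of the optimisation step; the paper's route avoids optimisation but calls on both regimes of $\psi$. Both lead to the same endpoint constraint $q\le 5r/3$.
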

\begin{proof}
First, using \eqref{emb} and \eqref{Korn}, we conclude that $\|\vv\|_{L^r(0,T;L^{\psi}(\R^3)^3)} \le CK$. Hence, we can estimate (with $\vv_\psi=\|v\|_{L^\psi}$)
\begin{align*}
\int_0^T \int_{\mathbb{R}^3} |\vv|^q \; dx \; dt &=\int_0^T \int_{|\vv|\le \vv_\psi} |\vv|^q \; dx + \int_{|\vv| >\vv_\psi} |\vv|^q \; dx \; dt\\
&\le \int_0^T \|\vv\|_2^{\frac{6-q}{2}}\left(\int_{|\vv|\le\vv_\psi}|\vv|^6 \; dx \right )^{\frac{q-2}{4}}\\
&\qquad+ \|\vv\|_2^{\frac{6r-69+2rq}{q(5r-6)}} \left( \int_{|\vv|>\vv_\psi} |\vv|^{\frac{3r}{3-r}} \; dx \right)^{\frac{3-r}{3r}\frac{3r(q-2)}{5r-6}}\; dt\\
&\le C\int_0^T  \|\vv\|_2^{\frac{6-q}{2}} \|\vv\|_{L^\psi}^{\frac{3(q-2)}{2}}+ \|\vv\|_2^{\frac{6r-69+2rq}{q(5r-6)}} \|\vv\|_{L^\psi}^{\frac{3r(q-2)}{5r-6}}\; dt \le CK
\end{align*}
provided that $q\le \frac{5r}{3}$.
\end{proof}

Next, we use the convention that the solution to  the following  Laplace equation
\begin{equation}
\triangle u = f \qquad \textrm{ in } \mathbb{R}^3, \label{laplace}
\end{equation}
is always given by
\begin{equation}
u(x):=\frac{1}{4\pi} \int_{\mathbb{R}^3} \frac{f(y)}{|x-y|} \; dy,
\label{deflap}
\end{equation}
whenever the integral on the right-hand side is meaningful. Using Calder\'{o}n-Zygmund singular integral operator theory, we can conclude that for all $q\in (1,\infty)$
\begin{align}
\|\nabla^2 u\|_q &\le K_q \|f\|_q,\label{2g}\\
\|\nabla u \|_q &\le K_q \|\vv\|_q, &&\textrm{with } f=\div \vv, \label{1g}\\
\|u\|_q &\le K_q \|\bs\|_q, &&\textrm{with }f=\div \div \bs. \label{0g}
\end{align}
Moreover, it is easy to show that we can set $K_2=1$. In addition, the solution of \eqref{laplace} given by \eqref{deflap} is unique in the class of function that solves \eqref{laplace} and that vanish at infinity.
Having theory for Laplace equation, we introduce the  so-called Helmholtz decomposition
$$
\vv=\vv_{div}+\nabla g^{\vv},
$$
where for a given $\vv\in W^{1,r}(\mathbb{R}^3)^3$, $g^{\vv}$ solves the problem
\begin{equation}
\label{helm} \Delta g^{\vv}=\div\,\vv,\quad g^{\vv}(x)\to
0\quad\mbox{as}\quad |x|\to\infty, \end{equation} and
$\vv_{div}:=\vv-\nabla g^{\vv}$. The following estimates then easily follows from \eqref{2g}--\eqref{1g}
\begin{equation}
\label{helm1} \|\nabla^2 g^{\vv}\|_{q} \le K_q \|\div\,\vv\|_q,\quad
\|\nabla \vv_{div}\|_{q}\le K_q \|\nabla \vv\|_{q},
\end{equation}
\begin{equation}
\label{helm2} \|\nabla g^{\vv}\|_{s} \le K_s \|\vv\|_s,\quad
\|\vv_{div}\|_{s}\le K_s \|\vv\|_{s},
\end{equation}
whenever the right hand sides make sense.

Next, we define an orthonormal  basis which is essential to perform a Fadeo-Galerkin scheme. Recall that for {\bf bounded} domain such a basis can be formed by the eigenvectors of the operator $-\Delta$ with suitable boundary conditions. Unfortunately, for $\R^3$ this procedure fails since the Laplacian has no eigenvalue. Using the fact that ${\mathcal
D}(\R^3)$ is separable we have:
\begin{lem}
\label{basis}
There is an orthonormal basis $\mathbf{\mathcal{B}}=\{\bo^1,\bo^2,\cdots\}$ of $W^{2,2}_{\div}(\R^3)$ such that each $\bo^k$ belongs to ${\mathcal
D}(\R^3)$.
\end{lem}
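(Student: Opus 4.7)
The strategy is to combine density of smooth compactly supported solenoidal fields in $W^{2,2}_{\div}(\R^3)$ with Gram-Schmidt orthonormalization. Introduce
\[
\mathcal{V} := \{\vec{\phi} \in \mathcal{D}(\R^3)^3 : \div \vec{\phi} = 0\}.
\]

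The central step is to prove that $\mathcal{V}$ is dense in $W^{2,2}_{\div}(\R^3)$. For $\vv \in W^{2,2}_{\div}(\R^3)$, the mollification $\vv_\eps := \eta_\eps * \vv$ is smooth, still solenoidal (convolution commutes with $\div$) and $\vv_\eps \to \vv$ in $W^{2,2}(\R^3)$, so it is enough to approximate smooth $W^{2,2}$-solenoidal fields by elements of $\mathcal{V}$. Since plain cut-off $\chi_R \vv_\eps$ destroys the solenoidal property, I would use the vector-potential trick available on $\R^3$: construct $\bpsi_\eps$ componentwise by solving $\Delta \bpsi_\eps = -\curl \vv_\eps$ via \eqref{deflap}; the identity $\div \vv_\eps = 0$ then yields $\vv_\eps = \curl \bpsi_\eps$. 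With $\chi_R \in \mathcal{D}(\R^3)$ equal to $1$ on $B_R$, the field $\vv_{\eps,R} := \curl(\chi_R \bpsi_\eps)$ belongs to $\mathcal{V}$, and Calder\'on-Zygmund estimates of the type \eqref{0g}--\eqref{2g} applied to \eqref{deflap} furnish decay of $\bpsi_\eps$ and its first two derivatives at infinity, allowing one to conclude $\vv_{\eps,R} \to \vv_\eps$ in $W^{2,2}(\R^3)$ as $R \to \infty$. A diagonal sequence in $(\eps,R)$ then approximates $\vv$ by elements of $\mathcal{V}$.

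Once density is in hand, the remainder is linear algebra. Since $\mathcal{D}(\R^3)$ is separable (as a subset of the separable Hilbert space $W^{2,2}(\R^3)^3$), so is $\mathcal{V}$; I extract a countable dense subset $\{\vv^k\}_{k=1}^\infty \subset \mathcal{V}$ of $W^{2,2}_{\div}(\R^3)$ and apply Gram-Schmidt in the $W^{2,2}$-inner product, discarding vectors linearly dependent on their predecessors. This produces an orthonormal sequence $\{\bo^k\}$, and each $\bo^k$ is a finite linear combination of $\vv^1,\ldots,\vv^k$, hence still belongs to $\mathcal{V} \subset \mathcal{D}(\R^3)^3$; since $\operatorname{span}\{\bo^k\}=\operatorname{span}\{\vv^k\}$ is dense in $W^{2,2}_{\div}(\R^3)$, the sequence $\{\bo^k\}$ is a complete orthonormal system. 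The only truly non-routine point is the density step, specifically controlling the truncation tail $\curl((1-\chi_R)\bpsi_\eps)$ in $W^{2,2}$; this is where the explicit form of the Laplace kernel and standard singular-integral estimates on $\R^3$ play the decisive role.
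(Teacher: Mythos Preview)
Your proposal is correct and follows exactly the strategy of the paper: density of $\mathcal{D}_{\div}(\R^3)$ in $W^{2,2}_{\div}(\R^3)$, separability, then Gram--Schmidt. The only difference is that the paper simply \emph{asserts} the density as a known fact and moves directly to Gram--Schmidt, whereas you additionally sketch a proof of density via the vector-potential/cutoff construction; so your argument is, if anything, more detailed than the paper's own proof.
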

\begin{proof}
For the convenience of the reader, we outline the proof. Since ${\mathcal
D}_{\div}(\R^3)$ is dense in $W^{2,2}_{\div}$, and
${\mathcal D}_{\div}(\R^3)$ is separable, there is a countable subset $\mathbf{\mathcal{C}}=\{\phi^1,\phi^2, \cdots\}$ of ${\mathcal D}(\R^3)$ dense in $W^{2,2}_{\div}$. From $\mathbf{\mathcal{C}}$ we can construct an orthonormal basis $\mathbf{\mathcal{B}}=
\{\bo^1,\bo^2,\cdots\}$ of the Hilbert-space $W^{2,2}_{\div}$ following the Gram-Schmidt procedure.
\end{proof}

Finally,  we review the classical Caratheodory theory for ordinary differential equations
\begin{equation}
\label{ODE-C}
\dot{\xi}(t) = {\mathcal A}(t, \xi(t))\quad \mbox{\sf a.e.},\quad \xi(0) = x,
\end{equation}
with non-smooth right-hand side ${\mathcal A} : I\times\R^3\to\R^3$
or equivalently the corresponding integral equation
\begin{equation}
\xi(t) = x +\Int_0^t\,{\mathcal A}(s, \xi(s))\,ds.
\end{equation}

\begin{defi}
\label{cara}
Let ${\mathcal A} : I\times\R^3\to\R^3$ be a function, where $I\subset\R$ is an interval. We say that ${\mathcal A}$ satisfies the Caratheodory conditions ({\bf CC}) if the following holds:\\

\begin{itemize}
\item ${\mathcal A}(t,x)$ is Lebesgue measurable
in $t$ for all fixed $x\in\R^3$,\\

\item ${\mathcal A}(t,x)$ is continuous in $x$ for almost all $t\in I$, and\\

\item $\Sup_{x\in\R^3}\,|{\mathcal A}(t,x)|\leq \beta(t)$ {\sf a.e.} for some positive function $\beta\in L^1_{loc}(I)$.
\end{itemize}
\end{defi}
Note that the first two Caratheodory conditions ensure Lebesgue
measurability of the composition $s\to {\mathcal A}(s, f(s))$ for
all $f \in (C(I))^3$, while the third condition is crucial in the
existence proof.
\begin{thm}[{\sf Existence theorem for ordinary differential equations}]
Let $I$ be some subinterval of $\R$ and assume that ${\mathcal A}$
satisfies ({\bf CC}) on $I\times\R^3$. Let $x\in\R^3$, then there
exists an absolutely continuous solution $\xi= (\xi_1,\cdots,\xi_d)$
to the ordinary differential equation \eqref{ODE-C}.
\end{thm}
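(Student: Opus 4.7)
The plan is to construct a sequence of approximate solutions via Tonelli's delayed-argument scheme, derive uniform bounds and equicontinuity from the integrable majorant $\beta$, extract a uniform limit by Arzel\`{a}--Ascoli, and pass to the limit in the integral version of \eqref{ODE-C} using the continuity of $\mathcal{A}(t,\cdot)$ together with dominated convergence. Without loss of generality I work on a compact subinterval $[0,T] \subset I$ on which $\int_0^T \beta(s)\,ds < \infty$; existence on general $I$ follows by a standard exhaustion argument.

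For each integer $n \geq 1$ I would define $\xi_n \in \mathcal{C}([-1/n, T]; \mathbb{R}^3)$ by setting $\xi_n(t) := x$ for $t \in [-1/n, 0]$ and
\begin{equation*}
\xi_n(t) := x + \int_0^t \mathcal{A}(s, \xi_n(s - 1/n))\, ds, \qquad t \in [0, T].
\end{equation*}
This scheme is well-defined by induction on the subintervals $[k/n, (k+1)/n]$: once $\xi_n$ is known on $[-1/n, k/n]$, the integrand $s \mapsto \mathcal{A}(s, \xi_n(s-1/n))$ is Lebesgue measurable on $[k/n,(k+1)/n]$ by the first Carath\'{e}odory condition (composition of a continuous and a measurable function), and is dominated by $\beta \in L^1_{loc}$, so the extension of $\xi_n$ to $[k/n,(k+1)/n]$ is defined and absolutely continuous.

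The third Carath\'{e}odory condition immediately yields the uniform bound $|\xi_n(t)| \leq |x| + \int_0^T \beta(s)\, ds$ and the equicontinuity estimate
\begin{equation*}
|\xi_n(t) - \xi_n(s)| \leq \Bigl| \int_s^t \beta(\tau)\, d\tau \Bigr| \qquad \text{for all } 0 \leq s \leq t \leq T,
\end{equation*}
which is uniform in $n$ because the right-hand side is the modulus of continuity of the indefinite integral of $\beta$. By Arzel\`{a}--Ascoli a subsequence, still denoted $\xi_n$, converges uniformly on $[0,T]$ to some $\xi \in \mathcal{C}([0,T]; \mathbb{R}^3)$. Since the shift satisfies $\|\xi_n(\cdot - 1/n) - \xi(\cdot)\|_\infty \to 0$ (combining uniform convergence with equicontinuity), continuity of $\mathcal{A}(s, \cdot)$ for a.e.\ $s$ gives $\mathcal{A}(s, \xi_n(s - 1/n)) \to \mathcal{A}(s, \xi(s))$ for a.e.\ $s \in [0,T]$.

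The main delicate point, and the one the Carath\'{e}odory framework is specifically designed to handle, is the passage to the limit in the integral. The pointwise a.e.\ convergence of the integrands is not by itself enough; the crucial input is the uniform pointwise domination $|\mathcal{A}(s, \xi_n(s - 1/n))| \leq \beta(s)$ together with $\beta \in L^1(0,T)$, which allows Lebesgue's dominated convergence theorem to give
\begin{equation*}
\int_0^t \mathcal{A}(s, \xi_n(s - 1/n))\, ds \longrightarrow \int_0^t \mathcal{A}(s, \xi(s))\, ds
\end{equation*}
for every $t \in [0,T]$. Passing to the limit in the defining identity yields $\xi(t) = x + \int_0^t \mathcal{A}(s, \xi(s))\,ds$, and since this representation expresses $\xi$ as the indefinite integral of an $L^1$ function plus a constant, $\xi$ is absolutely continuous and satisfies \eqref{ODE-C} a.e.\ with $\xi(0) = x$, as required.
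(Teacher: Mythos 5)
Your proposal cannot be compared against an in-paper argument, because the paper gives none: the theorem is quoted as classical Carath\'eodory theory and stated without proof (only the definition of the conditions ({\bf CC}) and a remark precede it). On its own merits, your argument is a correct and complete proof by the standard Tonelli delayed-argument scheme: the majorant $\beta$ gives the uniform bound and equicontinuity, Arzel\`a--Ascoli yields a uniform limit, continuity of $\mathcal{A}(s,\cdot)$ for a.e.\ $s$ gives a.e.\ convergence of the integrands, and dominated convergence (with dominant $\beta$) passes to the limit in the integral equation; the exhaustion to a general subinterval $I$ is indeed routine since the bound $\sup_x|\mathcal{A}(t,x)|\le\beta(t)$ is global in $x$. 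One point to tighten: the measurability of $s\mapsto\mathcal{A}(s,\xi_n(s-1/n))$ does not follow from the first Carath\'eodory condition alone, as your parenthetical suggests; it requires the first two conditions together (measurability in $t$ for each fixed $x$ and continuity in $x$ for a.e.\ $t$), exactly as the paper's remark after the definition of ({\bf CC}) points out --- one approximates the continuous function $\xi_n(\cdot-1/n)$ by piecewise constant functions, for which measurability of the composition is immediate, and uses continuity in the second argument to pass to the pointwise limit. The measurability of the limiting integrand $s\mapsto\mathcal{A}(s,\xi(s))$ then comes either from the same superposition fact or simply from its being an a.e.\ pointwise limit of measurable functions, which your dominated-convergence step implicitly needs. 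With that justification repaired, the proof is sound; it is the same classical route one finds in the literature (the alternatives being Euler polygons or a Schauder fixed-point argument applied to the integral operator, which buy nothing extra here).
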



\section{Equation for the pressure} \label{eq-pressure}


This section is devoted to the solvability and the properties of the solution to the following equation
\begin{equation}
-\triangle p = \div \div \left ( \vv \otimes \vv - \bs (p,\dd(\vv))\right ). \label{pressureeq}
\end{equation}
The basic properties are then established in the following
\begin{prop}\label{Ppres}
Let $\bs$ satisfy \eqref{A1}--\eqref{A2} with $\gamma_0 <1$ and with $r\in (1,2)$. Assume that there are $2<q_1<\infty$ such that $\vv \in L^{q}(\R^3)^3$ for all $q\in [2,q_1]$, and assume that $\vv \in D^{1,\phi}(\R^3)^3$. Then there exists unique $p$ solving \eqref{pressureeq} such that $p=p_1+p_2$ where
\begin{align}
\|p_1\|_{\frac{q}{2}} &\le C(q) \|\vv\|^2_{q} &&\textrm{ for all } q\in (2,q_1], \label{p1est}\\
\|p_2\|_{s} &\le C(s)\|\dd(\vv)\|_{L^{\phi}} && \textrm{ for all } s\in [2,r']. \label{p2est}
\end{align}
In addition, if $\vv \in W^{2,2}(\R^3)^3$ then the following estimate holds
\begin{equation}
\|\nabla p\|_2 \le \frac{1}{1-\gamma_0} \left( C\|\na \vv\|_2 \|\na \vv \|_3 + C_2 \left(\int_{\R^3}(1+|\dd(\vv)|^2)^{r-2}|\dd(\na \vv)|^2\; dx\right)^{\frac12} \right). \label{gradp}
\end{equation}
Moreover, if there is a sequence $\{\vv^n\}_{n=1}^{\infty}$ satisfying
\begin{align}
\vv^n &\rightharpoonup \vv &&\textrm{ weakly in } L^q(0,T; L^q(\R^3)^3) \textrm{ for all } q\in [2,q_1],\label{opice1}\\
\vv^n &\rightarrow \vv &&\textrm{ a.e. in } (0,T)\times \R^3, \label{opice2}\\
\na \vv^n &\rightarrow \na \vv &&\textrm{ a.e. in } (0,T)\times \R^3,\label{opice3}\\
\|\vv^n\|_{D^{1,\phi}}&\le C &&\textrm{ uniformly w.r.t. } n, \label{opice4}
\end{align}
then there is a (not relabeled) subsequence $\{p^n\}_{n=1}^{\infty}$ solving\footnote{We use solvability of \eqref{pressureeq} at each time $t$.} \eqref{pressureeq} such that ($p^n=:p_1^n+p_2^n$)
\begin{align}
p_1^n &\rightharpoonup p_1 &&\textrm{ weakly in } L^{\frac{q}{2}}(0,T;L^{\frac{q}{2}}(\R^3)) \textrm{ for all } q\in (2,q_1],\label{cp1}\\
p_2^n &\rightharpoonup p_2 &&\textrm{ weakly in } L^{s}(0,T;L^{s}(\R^3)) \textrm{ for all } s\in [2,r'],\label{cp2}\\
p^n &\rightarrow p &&\textrm{ a.e. in } (0,T)\times \R^3,\label{cp}
\end{align}
and the couple $(p,\vv)$ solves \eqref{pressureeq}.
\end{prop}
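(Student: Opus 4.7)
The plan is to split the pressure as $p = p_1 + p_2$ according to the two sources on the right-hand side of \eqref{pressureeq}: $p_1$ solves the linear Poisson problem $-\Delta p_1 = \div \div (\vv \otimes \vv)$, while $p_2$ satisfies the implicit viscous problem $-\Delta p_2 = -\div \div \bs(p_1 + p_2, \dd(\vv))$. The estimate on $p_1$ follows at once from \eqref{0g} applied with datum $\vv \otimes \vv$, giving $\|p_1\|_{q/2} \le K_{q/2}\|\vv\|_q^2$ for $q \in (2, q_1]$, which is \eqref{p1est}. For $p_2$ I would set up a Banach contraction on $L^2(\R^3)$: given $q \in L^2$, let $T(q)$ denote the unique solution (in the sense of \eqref{deflap}) of $-\Delta T(q) = -\div \div \bs(p_1 + q, \dd(\vv))$. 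Using $K_2 = 1$ in \eqref{0g} together with $|\partial \bs/\partial p| \le \gamma_0$ from \eqref{A2}, one obtains $\|T(q_1) - T(q_2)\|_2 \le \gamma_0 \|q_1 - q_2\|_2$, a strict contraction since $\gamma_0 < 1$, hence a unique fixed point $p_2 \in L^2$. The estimate \eqref{p2est} then follows from \eqref{0g} and \eqref{visc2}, using the pointwise bound $\min(|\dd|,|\dd|^{r-1})^s \le C\phi(|\dd|)$ valid for $s \in [2, r']$ after splitting into $|\dd| \le 1$ and $|\dd| > 1$ and exploiting $r < 2$.

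For \eqref{gradp}, using $\div \vv = 0$ I would first rewrite $\partial_i \partial_j(v^i v^j) = (\partial_j v^i)(\partial_i v^j)$. Testing the equation for $p_1$ by $p_1$, integrating by parts in the index $i$ (exploiting $\partial_i \partial_j v^i = 0$), and invoking H\"older with the Sobolev embedding $\|\vv\|_6 \le C\|\nabla \vv\|_2$, yields $\|\nabla p_1\|_2 \le C \|\nabla \vv\|_2 \|\nabla \vv\|_3$. Testing the equation for $p_2$ by $p_2$ produces $\|\nabla p_2\|_2^2 = \int_{\R^3} \partial_i p_2 \, \partial_j S_{ij}\,dx$, and expanding $\partial_j S_{ij}$ by the chain rule generates a $\partial_j p$-term with pointwise coefficient bounded by $\gamma_0$ (via \eqref{A2}) plus a $\partial_j \dd$-term with coefficient controlled by $C_2(1+|\dd|^2)^{(r-2)/2}$ (via \eqref{A1}). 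Cauchy--Schwarz followed by division by $\|\nabla p_2\|_2$ gives
\[
\|\nabla p_2\|_2 \le \gamma_0 \|\nabla p\|_2 + C_2 \Bigl(\int_{\R^3} (1+|\dd|^2)^{r-2}|\dd(\nabla \vv)|^2 \, dx\Bigr)^{1/2},
\]
and combining with the $p_1$ bound and absorbing the $\gamma_0 \|\nabla p\|_2$ term on the left produces the prefactor $1/(1-\gamma_0)$ in \eqref{gradp}.

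For the stability part, the weak convergences \eqref{cp1}--\eqref{cp2} follow immediately from the uniform versions of \eqref{p1est}--\eqref{p2est} applied to each $\vv^n$ together with reflexivity. The delicate point is \eqref{cp}: since $\bs$ is evaluated at the unknown $p^n$ itself, weak convergence alone cannot identify the limit of $\bs(p^n, \dd(\vv^n))$. My plan is to upgrade to strong $L^2$ convergence by a Cauchy argument that re-uses the contraction. Subtracting the equations for $p_2^n$ and $p_2^m$, applying \eqref{0g} with $K_2 = 1$, and using the splitting
\[
\|\bs(p^n,\dd(\vv^n)) - \bs(p^m,\dd(\vv^m))\|_2 \le \gamma_0 \|p^n - p^m\|_2 + \|\bs(p^m,\dd(\vv^n)) - \bs(p^m,\dd(\vv^m))\|_2,
\]
the last term vanishes in the limit via \eqref{opice3}--\eqref{opice4} and dominated convergence (the integrand is controlled by \eqref{visc2}), while strong local convergence $p_1^n \to p_1$ drops out of continuity of the Calder\'on--Zygmund operator applied to $\vv^n \otimes \vv^n \to \vv \otimes \vv$ under \eqref{opice1}--\eqref{opice2} and Egorov on bounded subsets. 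Because $\gamma_0 < 1$, the Cauchy estimate on $p_2^n$ closes in $L^2$, giving $p^n \to p$ in $L^2$ and hence \eqref{cp} along a subsequence; identifying $(p,\vv)$ as a solution of \eqref{pressureeq} then follows by passing to the limit term by term.

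The main obstacle throughout is the implicit coupling of $p$ into $\bs(p, \dd(\vv))$: the smallness $\gamma_0 < 1$ is indispensable at every key step, producing the Banach contraction that yields $p_2$, supplying the prefactor $1/(1-\gamma_0)$ in \eqref{gradp}, and closing the Cauchy argument that upgrades weak to a.e.\ convergence. The remaining technical difficulty — the unboundedness of $\R^3$, which precludes global compactness — is handled by working locally on balls, exhausting $\R^3$, and diagonalising, but this does not affect the core structure of the proof.
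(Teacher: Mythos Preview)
Your treatment of existence, the decomposition $p=p_1+p_2$, and the estimates \eqref{p1est}--\eqref{gradp} is essentially the paper's argument (Picard iteration versus Banach contraction is cosmetic), and is fine.

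The genuine gap is in the stability step \eqref{cp}. Two points:

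\textbf{(i) Nonlocality of $p_1^n$.} From \eqref{opice1}--\eqref{opice2} you get at best strong \emph{local} convergence $\vv^n\otimes\vv^n\to\vv\otimes\vv$ in $L^{s}(K)$ on compacta (Vitali). But $p_1^n$ is obtained by applying Riesz transforms to $\vv^n\otimes\vv^n$, and these are nonlocal: local strong convergence of the input does not yield even local convergence of the output. ``Continuity of the Calder\'on--Zygmund operator'' requires global $L^s$ convergence of $\vv^n\otimes\vv^n$, which would need tightness (no mass escaping to infinity) that the hypotheses do not provide. Egorov on balls does not help for the same reason.

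\textbf{(ii) The global $L^2$ Cauchy argument for $p_2^n$ does not close.} Your estimate has $\gamma_0\|p^n-p^m\|_2$ on the right, so after absorption you still need $\|p_1^n-p_1^m\|_2\to 0$ globally. But $p_1^n$ lies only in $L^{q/2}$ for $q\in(2,q_1]$, so $p_1^n\in L^2(\R^3)$ is not even guaranteed unless $q_1\ge 4$, and global strong $L^2$ convergence is certainly not available. Likewise, ``dominated convergence'' for $\|\bs(p^m,\dd(\vv^n))-\bs(p^m,\dd(\vv^m))\|_2$ fails: the bound from \eqref{visc2} depends on $n,m$ and there is no fixed integrable majorant on all of $\R^3$; Vitali would again require tightness.

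The paper resolves both issues by a localized duality argument: for $p_1^n$ one solves $\Delta g^n=|p_1^n-p_1|^{s-2}(p_1^n-p_1)$, tests the difference equation against $g^n f_k$ with a cutoff $f_k$ supported in $B_{2k}$, and uses that $\|\nabla^2 g^n\|_a$ is controlled for suitable $a$ so that (via Sobolev embedding) $g^n,\nabla g^n$ are bounded in $L^\infty$, making the tail contribution on $B_{2k}\setminus B_k$ vanish as $k\to\infty$. A similar construction with $g^{mn}$ handles $p_2^n$. This is the missing idea: one must quantify decay of the auxiliary potential at infinity, rather than appeal to compactness that the singular integral does not respect.
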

Although the proof follows almost step by step the procedure developed in \cite{MNR} (see also \cite{BuMaRa07}), we prove it here in all details because in \cite{MNR,BuMaRa07} the similar results were proven only in bounded domain.
\begin{proof}[Proof of Proposition \ref{Ppres}]
We prove the existence of $p$ solving \eqref{pressureeq} only for $\vv \in \mathcal{D}(\R^3)^3$. The existence result for general $\vv$ then follows from \eqref{p1est}--\eqref{cp}. Hence, let $\vv\in \mathcal{D}(\R^3)^3$ be arbitrary. Define $p^1=0$ and we find $p^n$ as the solution of
\begin{equation}
-\triangle p^n = \div \div \left ( \vv \otimes \vv - \bs (p^{n-1},\dd(\vv))\right ). \label{pressureeqpr}
\end{equation}
Next, we show that $p^n$ is Cauchy sequence in $L^2(\R^3)$ and therefore converges strongly in $L^2(\R^3)$ to some $p$ that has to solve \eqref{pressureeqpr}. To do it, we first note that due to \eqref{A1} and \eqref{0g} $p^n\in L^2(\R^3)$ for all $n$. Next, since
$$
-\triangle (p^n-p^m) = \div \div \left (\bs(p^{n-1},\dd(\vv))-\bs(p^{m-1},\dd(\vv)) \right )
$$
the estimate \eqref{0g} implies that
$$
\|p^n-p^m\|_2\le  \|\bs(p^{n-1},\dd(\vv))-\bs(p^{m-1},\dd(\vv))\|_2 \overset{\eqref{A2}}\le \gamma_0 \|p^{n-1} - p^{m-1}\|_2.
$$
Since $\gamma_0 <1$ we end by using the Banach fixed point theorem.

Next, for a given $(p,\vv)$ solving \eqref{pressureeq} we define $p_1$, $p_2$ through
\begin{align}
-\triangle p_1 &= \div \div (\vv \otimes \vv),\label{opicka1}\\
\triangle p_2 &= \div \div \bs(p,\dd(\vv))\label{opicka2}.
\end{align}
Therefore we see that $p_1+p_2=p$ and \eqref{0g} implies (after using \eqref{visc2}, the assumption on $\vv$ and the fact that $1<r<2$) \eqref{p1est}--\eqref{p2est}. Next, using \eqref{1g}, we immediately deduce that
\begin{align*}
\|\nabla p\|_2 &\le \| |\vv| |\nabla \vv|\|_2 + \|\na \bs(p,\dd(\vv))\|_2\\
&\le C\|\nabla \vv \|_2 \|\nabla \vv \|_3 + C_2 \left( \int_{\R^3} (1+|\dd (\vv)|^2)^{\frac{r-2}{2}}|\dd (\nabla \vv)|^2 \; dx \right )^{\frac{1}{2}} + \gamma_0\|\nabla p\|_2
\end{align*}
where for the second inequality, we used standard interpolation and Sobolev embedding and also \eqref{A1}--\eqref{A2} (see \cite{MNR} for details). Thus, \eqref{gradp} easily follows.

Finally, assume that we have sequence $\vv^n$ satisfying \eqref{opice1}--\eqref{opice4}. Hence, we can find the corresponding sequence $p^n$ solving \eqref{pressureeq} and satisfying \eqref{cp1}--\eqref{cp2}, where $p_1^n, p_2^n$ are defined\footnote{The Bochner measurability over time follows from the fact that the mapping that assigns to fixed $\vv$ some $p$ solving \eqref{pressureeq} is continuous. We refer to \cite{MNR} for details.}  through \eqref{opicka1}, \eqref{opicka2}. First, we show a.e. convergence of $p_1^n$. We find $g^n$ solving for some fixed $s$, such that $1<s<\frac{q_1}{2}$
$$
\triangle g^n = |p^n_1 - p_1|^{s-2}(p^n_1 - p_1).
$$
Without loss of generality assume that $q_1<3$. Therefore we get $s<2$. Hence, using \eqref{2g} and \eqref{cp1} we get that
\begin{equation}
\int_0^T\|\na^2 g^n\|^a_a \; dt  \le C \qquad \textrm{ for all } a \in \left(\frac{1}{s-1}, \frac{q_1}{2(s-1)}\right]. \label{polip}
\end{equation}
Then, we find a sequence of nonnegative functions $f_k$ such that $f_k(x)=1$ in $B(0,k)$, $f_k(x)=0$ in $\R^3 \setminus B(0,2k)$, and that satisfies $|\nabla f|\le Ck^{-1}$ and $|\nabla^2 f|\le C k^{-2}$. Next, multiplying the equation for $p_1^n-p_1$, i.e.,
$$
-\triangle (p^n_1-p_1) = \div \div (\vv^n \otimes \vv^n - \vv \otimes \vv)
$$
by $g^n f_k$, integrating over $(0,T)\times \R^3$ and using integration by parts, we find that
$$
\int_0^T(p_1^n-p_1,\triangle (g^nf_k))\; dt = -\int_0^T(\vv^n\otimes \vv^n - \vv\otimes \vv, \na^2 (g^nf_k))\; dt.
$$
Using, \eqref{opice1} and  \eqref{opice2} and the fact that we
integrate over a compact domain, we see that the integral on
the right hand side tends to zero as $n$ tends to infinity,
provided that \eqref{polip} is satisfied for some
$a>(\frac{q_1}{2})'$. Since $s<\frac{q_1}{2}$ this is always
true. Next, using the identity $\triangle (g^nf_k) = f_k
\triangle g^n + g^n \triangle f_k + 2\nabla g^n \cdot \nabla
f_k$ and the definition of $g^n$ we deduce that
\begin{align*}
&\limsup_{n\to \infty} \int_0^T\int_{\R^3} |p^n_1-p_1|^s f_k \;
dx \; dt \\
&\qquad\le \limsup_{n\to \infty} C\int_0^T
\int_{B(0,2k)}|p_1^n-p_1|(|\nabla g^n| k^{-1}+ |g^n|k^{-2}) \;
dx \; dt.
\end{align*}
Setting $a:=\frac{q_1}{2(s-1)}$ we
deduce from the fact that $q_1<3$ that $a>3$ and since
$s<\frac{q_1}{2}$ we also obtain that $a'<\frac{q_1}{2}$.
Consequently, using \eqref{polip} we obtain $\int_0^T \|\nabla
g^n\|_{\infty}^a + \|g^n\|_{\infty}^a \; dt \le C$. Finally,
the H\"{o}lder inequality, \eqref{cp1} and the fact that
$q_1<3$ imply that
\begin{align*}
\int_0^T \int_{B(0,2k)}|p_1^n-p_1|(|\nabla g^n| k^{-1} +
|g^n|k^{-2}) \; dx \; dt\le C (k^{-1} + k^{-2}) k^{\frac{3}{a}}
\overset{a>3}{\underset{k\to\infty}{\to}} 0
\end{align*}
and using this it is easy to deduce that $p^n_1 \to p_1$ a.e. in $(0,T)\times \R^3$.

Next, we apply the similar procedure also for $p_2^n$. First we find for some fixed $s\in (2,r')$ for which $s<3$, the function $g^{mn}$ solving
$$
\triangle g^{mn} = |p^n_2-p^m_2|^{s-2}(p^n_2-p^m_2)f_k^{s-1},
$$
where $f_k$ is defined above. It is consequence of \eqref{2g} and \eqref{cp2} that
\begin{equation}
\int_0^T \|\nabla^2 g^{mn}\|_a^a \le C \quad \textrm{ for all } a\in \left[ \frac{2}{s-1}, \frac{r'}{s-1} \right]. \label{polip2}
\end{equation}
Next, multiplying the equation for $p^n_2-p^m_2$ by $f_k
g^{mn}$, integrating over $(0,T)\times \R^3$ and using the
similar procedure as above, we obtain
\begin{align*}
&\int_0^T\int_{\R^3}|p_2^n-p_2^m|^s f_k^s \; dx \; dt
\le \int_0^T (\bs(p^n,\dd(\vv^n))-\bs(p^m,\dd(\vv^m)), \na^2 (g^{mn}f_k)) \; dt \\
&\quad+ C\int_0^T \int_{B(0,2k)} |p_2^n-p_2^m|(|\nabla
g^{mn}|k^{-1} + |g^{mn}|k^{-2}) \; dx \; dt=:I_1^{mn} +
I_2^{mn}.
\end{align*}
To estimate $I_1^{mn}$ we use \eqref{A1}--\eqref{A2} (see \cite{MNR} for details) to observe
\begin{align*}
I_1^{mn}&= \int_0^T (\bs(p^n,\dd(\vv^n))-\bs(p^n_1 +p_2 ,\dd(\vv)),\na^2 (g^{mn}f_k))\; dt \\
&\quad + \int_0^T (\bs(p^n_1+p_2,\dd(\vv))-\bs(p,\dd(\vv)),\na^2 (g^{mn}f_k))\; dt\\
&\quad +\int_0^T (\bs(p,\dd(\vv))-\bs(p^m_1 + p_2,\dd(\vv)),\na^2 (g^{mn}f_k))\; dt\\
&\quad + \int_0^T(\bs(p^m_1 + p_2,\dd(\vv))-
\bs(p^m,\dd(\vv^m)),\na^2 (g^{mn}f_k))\; dt\\
&\le o(m,n) + \gamma_0\int_0^T(|p_2^n-p_2|+|p_2^m-p_2|,\na^2(g^{mn} f_k))\; dt,
\end{align*}
where
$$
\lim_{n,m \to \infty} o(n,m) =0.
$$
Thus, using \eqref{2g} and the definition of $g^{mn}$, we finally derive the estimate
\begin{align*}
\int_0^T \|(p^n_2-p^m_2)f_k\|_s^s\; dt  &\le o(m,n) +
\gamma_0\int_0^T \|(p^n_2-p_2)f_k\|_s^s+ \|(p^m_2-p_2)f_k\|_s^s
\; dt \\
&\qquad + CI_2^{mn}.
\end{align*}
Hence, applying for $I_2^{mn}$ the same procedure as above, taking limit w.r.t. $n$ and $m$, using weak lower semicontinuity of norm and the fact that $\gamma_0 < \frac12$, we finally conclude that $p_2^n \to p_2$ a.e. in $(0,T)\times \R^3$ which consequently implies \eqref{cp}.
\end{proof}


\section{The $\delta$ - approximation}


Throughout this section $\delta>0$ is
fixed real number. In order to take easily the limit from the
Galerkin (finite-dimensional) approximation to a continuous
(infinite-dimensional) approximation we have to define carefully the
approximation of \eqref{eq1}--\eqref{CD}. We introduce
the following $\delta$ - approximation:
\begin{equation}
\label{eq1-app} \vv_t+\div(\vv\otimes \vv)-\div(\bs(p,\dd(\vv)))+\delta\Delta^2 \vv=-\nabla p \quad\mbox{in}\quad[0,T]\times\R^3,
\end{equation}
\begin{equation}
\label{press-app}
\div \vv = 0 \quad\mbox{in}\quad[0,T]\times\R^3,
\end{equation}
\begin{equation}
\label{CD-app}
\vv(0,\cdot)=(\vv_0)_\delta\quad\mbox{in}\quad\R^3.
\end{equation}
Here the symbol $(\cdot)_\de$ stands for usual mollification by convolution.
The main existence result of this section is the following
\begin{prop}
\label{exis-galer}
Assume that $\vv_0\in L^2_{div}(\mathbb{R}^3)$ and $1\le r < 2$. Then there exist
\begin{align*}
\vv&:=\vv^{\delta}\in W^{1,2}(0,T; L^2(\mathbb{R}^3)^3)\cap L^{\infty}(0,T;W^{2,2}_{\div}(\mathbb{R}^3)^3),\\
\intertext{and}
p&:=p^{\delta}\in L^{2}(0,T;W^{1,2}(\mathbb{R}^3)),
\end{align*}
satisfying \eqref{eq1-app}-\eqref{press-app} weakly, i.e.,
\begin{equation}
\begin{split}
&\int_0^T (\vv_t,\bpsi)-(\vv \otimes \vv, \nabla
\bpsi)+(\bs(p,\dd(\vv)),\dd(\bpsi))\; dt\\
&+\delta\, \Int_0^T(\nabla^2\vv,\nabla^2\bpsi)\; dt=
\int_0^T\,(p,\div \bpsi)\; dt \quad \forall\;\bpsi\in
L^2(0,T;W^{2,2}(\mathbb{R}^3)^3).
\end{split}
\label{weak2}
\end{equation}
\end{prop}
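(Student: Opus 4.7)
The plan is a Galerkin scheme built on the basis $\{\bo^k\}$ provided by Lemma \ref{basis}, coupled with the pressure construction from Proposition \ref{Ppres}. Seek $\vv^n(t,x) = \sum_{k=1}^n c_k^n(t) \bo^k(x)$ with initial data $c_k^n(0) = ((\vv_0)_\delta, \bo^k)$, and for each such $\vv^n$ define the associated pressure $p^n(t,\cdot)$ as the unique solution of \eqref{pressureeq} furnished by Proposition \ref{Ppres}, whose hypotheses hold since $\gamma_0 < \frac{C_1}{C_1+C_2} < 1$ and each $\vv^n$ is smooth and compactly supported. Testing \eqref{eq1-app} against each $\bo^k$ and exploiting $\div \bo^k = 0$ to eliminate $\nabla p^n$ reduces the problem to a finite-dimensional ODE system $\dot c_k^n = \mathcal{A}_k(t, \mathbf{c}^n)$; continuity of the pressure map (Proposition \ref{Ppres}) and the growth bound \eqref{visc2} render $\mathcal{A}_k$ continuous in $\mathbf{c}^n$ with a locally integrable majorant, so the Caratheodory theorem yields an absolutely continuous local solution.

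The basic a priori estimate is obtained by testing with $\vv^n$: Lemma \ref{ineq-visc}, integration by parts on the bi-Laplacian, and the divergence-free property (which kills both the convective and pressure terms) give
\begin{equation*}
\tfrac12 \|\vv^n(t)\|_2^2 + \tfrac{C_1}{2} \Int_0^t\!\!\Int_{\R^3} \min\!\bigl(|\dd(\vv^n)|^2, |\dd(\vv^n)|^r\bigr) \, dx \, ds + \delta \Int_0^t \|\nabla^2 \vv^n\|_2^2 \, ds \le \tfrac12 \|\vv_0\|_2^2,
\end{equation*}
extending the ODE solution to all of $[0,T]$. To reach the stronger regularity $L^{\infty}(0,T;W^{2,2}_{\div})$ and $W^{1,2}(0,T;L^2)$ claimed in the proposition, I would test with $\vv^n_t$, which is admissible since it lies in the basis span and is divergence-free (so $\nabla p^n$ drops out again). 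The bi-Laplacian contributes $\tfrac{\delta}{2}\tfrac{d}{dt}\|\nabla^2\vv^n\|_2^2$, the time-derivative term contributes $\|\vv^n_t\|_2^2$, and the convective term is absorbed via Young, Sobolev embedding, and the previous energy bound. Smoothness of $(\vv_0)_\delta$ makes $\|\nabla^2 (\vv_0)_\delta\|_2$ finite, and Gr\"onwall closes the estimate uniformly in $n$.

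Finally, pass to the limit. Banach--Alaoglu delivers a (not relabelled) subsequence with $\vv^n$ converging weakly-$*$ to some $\vv$ in $L^{\infty}(0,T;W^{2,2})$ and $\vv^n_t \rightharpoonup \vv_t$ in $L^2$; Aubin--Lions applied with the compact embedding $W^{2,2} \hookrightarrow\!\hookrightarrow W^{1,2}_{\mathrm{loc}} \hookrightarrow L^2_{\mathrm{loc}}$ then yields strong convergence of $\vv^n$ and $\nabla\vv^n$ in $L^2(0,T;L^2_{\mathrm{loc}})$, hence almost-everywhere convergence along a further subsequence. These ingredients verify \eqref{opice1}--\eqref{opice4}, so Proposition \ref{Ppres} supplies $p^n \to p$ a.e. Continuity of $\bs$ in both arguments together with the bound \eqref{visc2} and Vitali's theorem identify the weak limit of $\bs(p^n, \dd(\vv^n))$ as $\bs(p, \dd(\vv))$. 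The weak formulation \eqref{weak2} follows first for $\bpsi$ in the basis span and then, by density of $\{\bo^k\}$ in $W^{2,2}_{\div}$, for all $\bpsi \in L^2(0,T;W^{2,2}(\R^3)^3)$.

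The main obstacle is the higher-order estimate. The term $(\bs(p^n,\dd(\vv^n)), \dd(\vv^n_t))$ arising from the $\vv^n_t$ test is delicate because $\bs$ is pressure-dependent and generally has no clean potential structure in $\dd$: one either uses a quasi-potential decomposition $\bs = \partial_{\dd}\Phi(p,\dd)$ and controls the resulting $\partial_t p^n$ correction via \eqref{A2} and the time-differentiated pressure equation (exactly the mechanism behind \eqref{gradp}), or estimates the term directly using \eqref{A1}--\eqref{A2} and absorbs the troublesome contribution into the dissipation provided by the $\delta$-regularization. The smallness assumption $\gamma_0 < C_1/(C_1+C_2)$ is precisely what makes this absorption work.
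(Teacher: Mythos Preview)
Your overall strategy---Galerkin scheme on the basis from Lemma~\ref{basis}, pressure via Proposition~\ref{Ppres}, energy estimate, higher-order estimate by testing with $\vv^n_t$, Aubin--Lions, and identification of the nonlinear limit via a.e.\ convergence---matches the paper's proof exactly. Two points deserve correction.

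\medskip
\noindent\textbf{The higher-order estimate is simpler than you make it.} You flag $(\bs(p^n,\dd(\vv^n)),\dd(\vv^n_t))$ as the main obstacle and propose either a potential structure or some absorption mechanism exploiting the smallness of $\gamma_0$. Neither is needed. Because each $\vv^n$ is a finite sum of compactly supported smooth functions, you may integrate by parts once more and write the term as $(\div\bs(p^n,\dd(\vv^n)),\vv^n_t)$. Then \eqref{A1}--\eqref{A2} give the pointwise bound $|\div\bs|\le C_2|\nabla^2\vv^n|+\gamma_0|\nabla p^n|$ (since $r<2$), hence
\[
(\div\bs,\vv^n_t)\le \tfrac16\|\vv^n_t\|_2^2 + C\bigl(\|\vv^n\|_{2,2}^2+\|\nabla p^n\|_2^2\bigr).
\]
The basic energy estimate already yields $\int_0^T\delta\|\vv^n\|_{2,2}^2\le C$, and Proposition~\ref{Ppres} (specifically \eqref{gradp}) then gives $\int_0^T\|\nabla p^n\|_2^2\le C(\delta)$. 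Gr\"onwall closes the inequality. Note that only $\gamma_0<1$ is used here, through the factor $(1-\gamma_0)^{-1}$ in \eqref{gradp}; the sharper condition $\gamma_0<C_1/(C_1+C_2)$ plays no role in this proposition and enters only later, in the $\delta\to 0$ passage of Section~6.

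\medskip
\noindent\textbf{Extension to non-solenoidal test functions.} Density of $\{\bo^k\}$ gives \eqref{weak2} only for divergence-free $\bpsi$. To reach all $\bpsi\in L^2(0,T;W^{2,2}(\R^3)^3)$ you must invoke the Helmholtz decomposition $\bpsi=\bpsi_{\div}+\nabla g^{\bpsi}$ and verify that the gradient part is handled by the pressure equation \eqref{pressureeq}; the paper does this explicitly in the last line of its proof.
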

\begin{proof}[Proof of Proposition \ref{exis-galer}]  We proceed via Faedo-Galerkin scheme. We look for  $\vv^N=\vv^N(t,x):=\Sum_{k=1}^N\,c_k^N(t)\,\bo^k(x)$ that is the
solution of Faedo-Galerkin approximation of
\eqref{eq1-app}-\eqref{press-app},i.e., we look for a vector $
\bc^N(t):=(c_1^N(t),c_2^N(t),\cdots,c_N^N(t))\in\R^N$ that
solves the following system of ordinary differential equations
\begin{equation}
\label{ODE} \left\{
\begin{aligned}
&\left(\frac{d
\vv^N}{dt},\bo^k\right)-\left(\vv^N_{\div}\otimes
\vv^N,\nabla\bo^k\right)+\left(\bs({\mathcal
P}(\vv^N),\dd(\vv^N)),\dd(\bo^k)\right)\\
&+\de\left(\nabla^2\vv^N,\nabla^2\bo^k\right)=
0,\quad
k=1,\cdots,N,\\&\vv^N(0)=(\vv^N_0)_\delta,
\end{aligned}
\right.
\end{equation}
where $(\vv^N_0)_\delta:=\Sum_{k=1}^N\;\langle(v_0)_\delta,\bo^K\rangle_{W^{2,2}}\,\bo^k$ and ${\mathcal
P}(\vv^N):=p^N$, where $p^N$ is given by \eqref{pressureeq} with $\vv^N$.
The initial value problem \eqref{ODE} can be written in the form
\begin{equation}
\label{ODE1} \dot{\mbox{\sf\large c}}^N={\mathcal
F}\left(\mbox{\sf\large c}^N\right)\quad\mbox{with}\quad
\mbox{\sf\large c}^N(0)=\mbox{\sf\large c}_0^N,
\end{equation}
where ${\mathcal F} : \R^N\to\R^N$ is a continuous function. Hence we obtain an ordinary differential equation for which the Caratheodory theory can be applied. In conclusion, for every $N\geq 1$ there exists an approximate solution $\vv^N$ on the short time interval $[0,\tau]$ that can be however extended onto the whole interval of interest $(0,T)$ by using the estimates proved below.  Our aim is to pass to the limit as $N\to\infty$ (for fixed $\delta$). To do it, we shall first establish a priori estimates that are independent of $N$.

Next , multiplying the k-th equation in \eqref{ODE}$_1$ by $c^N_k(t)$, using the fact that convective term vanishes after integration by parts ($\vv^N$ has compact support),  using the estimate \eqref{visc1} and integrating the result over times interval $(0,t)$, we deduce that
\beq
\label{ener1}
\begin{split}
\frac12 \|\vv^N(t)\|_2^2+C_1 \int_0^t \int_{\mathbb{R}^3} \phi(|\dd (\vv^N)|) \; dx \; dt + \int_0^t\delta \|\nabla^2 \vv^N\|_2^2 \le \frac12\|(\vv_0^N)_\delta\|_2^2.
\end{split}
\eeq
Now, using Proposition \ref{Ppres}, \eqref{ener1} and the fact that $\vv_0\in L^2(\mathbb{R}^3)^3$, we deduce the
inequality\footnote{Note that we use the fact that
$\|\na\vv^N\|_2^2 = (\nabla \vv^N, \nabla \vv^N)=-(\triangle \vv^N, \vv^N) \le \|\nabla^2 \vv^N\|_2^2 + \|\vv^N\|_2^2$.}
\begin{equation}
\begin{split}
\label{ener2}
\sup_{t\in (0,T)}\|\vv^N(t)\|_2^2+2 C_1&\Int_0^T\Int_{\R^3}\phi(|\dd(\vv^N)|) \; dx\; dt \\ &\qquad+\Int_0^T\,\Big(\de\| \vv^N\|_{W^{2,2}}^2 +C(\delta)
\|p^N\|_{1,2}^2\Big) \; dt\le C.
\end{split}
\end{equation}
It is worth of noticing that the constant appearing on the right-hand side of \eqref{ener2} is $\delta$-independent. Next, we deduce the similar estimate also for time derivative $\vv_t$. Hence, multiplying the $k$-th equation in \eqref{ODE}$_1$ by $(c^N_K(t))_t$ and using integration by parts ($\vv^N$ has compact support), we deduce that
\beq
\label{ener3}
\|\vv^N_t(t)\|_2^2+\frac{\de}{2}\frac{d}{dt}\|\na^2\vv^N(t)\|_2^2=\cI_1+\cI_2,
\eeq
where
\beqn
\cI_1&=&-\left(\div\left(\vv^N\otimes\vv^N\right),\vv^N_t \right),\\
\cI_2&=&\left(\div\left(\bs(p^N,\dd(\vv^N))\right),\vv^N_t\right).
\eeqn
Using the H\"older inequality and \eqref{helm2}, we deduce
\beqn
\cI_1&\leq&\|\div (\vv^N\otimes\vv^N)\|_2\|\vv^N_t\|_2 \leq C\|\vv^N\|_3\|\na\vv^N\|_6\|\vv^N_t\|_2\\
&\leq&\frac{1}{6}\|\vv^N_t\|_2^2+C\|\na \vv^N\|_2 \|\vv^N\|_2\|\na^2\vv^N\|_2^2.
\eeqn
Next, using \eqref{A1}--\eqref{A2}, we conclude
\beqn
\cI_2\leq\|\div\left(\bs(p^N,\dd(\vv^N))\right)\|_2\,\|\vv^N_t\|_2\leq \frac{1}{6}\|\vv^N_t\|_2^2+C\left(\|\na p^N\|_2^2+\|\vv^N\|_{2,2}^2\right).
\eeqn
Finally, substituting the estimates for $\mathcal{I}_1$ and $\mathcal{I}_2$ into  \eqref{ener3}, we obtain
\begin{equation}
\label{ener3.1}
\|\vv^N_t\|_2^2 + C\delta \frac{d}{dt} \|\nabla^2 \vv^N\|_2^2 \le C(1+\|\na p^N\|_2^2+ \|\na \vv^N\|_2 \|\vv^N\|_2)(1+\|\nabla^2 \vv^N\|_2^2).
\end{equation}
Consequently, using \eqref{ener2} and the Gronwall inequality we conclude
\beq
\label{ener4}
\sup_{t\in (0,T)}\|\na^2\vv^N(t)\|_2^2 + \Int_0^T\,\|\vv^N_t\|_2^2\;dt\le C(\delta).
\eeq

Having \eqref{ener2} and \eqref{ener4} and using \eqref{visc2}, we see that there is $(\vv,p,\overline{\bs})$ such that (up to subsequence)
\beqn
\vv^N\rightharpoonup^* \vv&\quad\mbox{weakly$^*$ in}\quad& L^{\infty}(0,T;W^{2,2}(\R^3)^3),\\
\vv^N_t\rightharpoonup \vv_t&\quad\mbox{weakly in}\quad& L^2(0,T; L^2(\R^3)^3),\\
p^N\rightharpoonup^* p&\quad\mbox{weakly$^*$ in}\quad& L^{\infty}(0,T; W^{1,2}(\R^3)),\\
\bs(p^N,\dd(\vv^N))\rightharpoonup^* \overline{\bs} &\quad\mbox{weakly$^*$ in}\quad& L^{\infty}(0,T;L^2(\R^3)^{3\times 3}).
\eeqn
It remains to pass to the limit in the equation \eqref{ODE}.
As usually, the most delicate terms to deal with are the nonlinear one. The classical tool is to use some compactness arguments in order
to obtain strong convergence in the adequate function spaces. Hence, by using Aubin-Lions compactness lemma, we get (up to subsequence)
\beqn
\vv^N\rightarrow\vv&\quad\mbox{strongly in}\quad&L^2(0,T;W^{1,2}_{loc}(\R^3)^3),\\
\vv^N\rightarrow\vv&\quad\mbox{a.e. in}\quad&[0,T]\times\R^3,\\
\nabla \vv^N\rightarrow \nabla \vv&\quad\mbox{a.e. in}\quad&[0,T]\times\R^3.
\eeqn
Consequently, using Proposition \ref{Ppres}, we deduce
\beqn
p^N\rightarrow p&\quad\mbox{a.e. in}\quad&[0,T]\times\R^3.
\eeqn
Using these convergence results it is easy to deduce that $\overline{\bs}=\bs(p,\dd (\vv))$ and to set $N\to\I$ in \eqref{ODE} and obtain a solution $(\vv, p)$ to
the equations \eqref{pressureeq} and  \eqref{weak2}, where \eqref{weak2} holds for all $\bpsi$ with $\div \bpsi=0$. Finally using the Helmolhtz decomposition and the equation for the pressure \eqref{pressureeq} we finally obtain the validity of \eqref{weak2} for all given $\bpsi$. The proof of Proposition \ref{exis-galer} is complete.
\end{proof}


\section{Limit $\delta \to 0$}


In this subsection we denote $(\vv^{\de}, p^{\de})$ a solution from Proposition \ref{exis-galer}.  Our main goal in this section is to set  $\de \to 0$ in the equation \eqref{eq1-app} and to get \eqref{weak21}.

First, using weak lower semicontinuity of norms and the Fatou lemma, it is easy to deduce from \eqref{ener2} and from the estimate \eqref{INT} that
\begin{equation}
\begin{split}
\label{ener-bound}
\sup_{0\leq t\leq T}\|\vv^\de (t)\|_2^2 + 2C_1\Int_0^T&\Int_{\R^3}\Big(\phi(|\dd(\vv^\de)|) + |\vv^\de|^q\; dx\Big) \; dt \\
&\qquad+ \int_0^T\Big(\delta \|\nabla^2 \vv^\de \|_2^2 + C(\delta)\|p^\de \|_{1,2}^2\Big) \; dt \le C,
\end{split}
\end{equation}
for all $q\in [2,\frac{5r}{3})$.
To obtain uniform estimate also on the pressure, we use Proposition \ref{Ppres} to obtain the decomposition $p^{\de}=p_1^\de + p_2^\de$ such that for all $q\in (2,\frac{5r}{6})$ and all $s\in [2,r']$ there holds
\begin{equation}
\int_0^T \|p_1^\de \|_q^q + \|p_2^\de\|_s^s \le C(q,s). \label{ener-bound2}
\end{equation}
Similarly, \eqref{ener4} implies that
\beq
\label{ener4.1}
\sup_{t\in (0,T)}\|\na^2\vv^{\de}(t)\|_2^2 + \Int_0^T\,\|\vv^{\de}_t\|_2^2\;dt  \le C(\delta).
\eeq

Therefore, using \eqref{ener-bound}, \eqref{ener-bound2} and the Aubin-Lions lemma, we can extract a not relabeled subsequence such that
\begin{align}
\vv^{\de} &\rightharpoonup^* \vv &&\textrm{ weakly$^*$ in } L^{\infty}(0,T; L^2(\R^3)^3),\label{pit1}\\
\vv^{\de} &\rightharpoonup \vv &&\textrm{ weakly in } L^{r}(0,T; W^{1,r}_{loc}((\R^3)^3),\label{pit2}\\
\vv^{\de}_t &\rightharpoonup \vv_t &&\textrm{ weakly in } L^{\frac{5r}{6}}(0,T; W^{-2,2}((\R^3)^3),\label{pit2.1}\\
\vv^{\de} &\rightarrow \vv &&\textrm{ a.e. in } (0,T)\times \R^3,\label{pit2.1.1}\\
p^\de_1 &\rightharpoonup p_1 &&\textrm{ weakly in } L^{q}(0,T; L^q(\R^3)) \textrm{ for all } q\in (1,\frac{5r}{6}],\label{pit3}\\
p^\de_2 &\rightharpoonup p_2 &&\textrm{ weakly in } L^{s}(0,T; L^s(\R^3)) \textrm{ for all } s\in [2,r'].\label{pit4}
\end{align}
These, estimates are sufficient to pass to the limit from \eqref{weak2} to \eqref{weak21} if we show the point-wise convergence also for $\na \vv^{\de}$ and $p^\de$.

To do it, we first derive some regularity estimates.
For this purpose we observe first that $\vv^\de$ is more regular. Indeed, $\vv^\de$ solves
\beq
\label{regular11}
\Delta^2\vv^\de=F,
\eeq
where (after using \eqref{ener-bound} and \eqref{ener4.1})
$$
F:=\div \bs(p^\de, \dd (\vv^\de)) -\div\left(\vv^{\de}\otimes\vv^\de \right)-\na p^\de- \vv{^\de}_t \in L^2(0,T; L^2(\R^3)^3).
$$
Thus, using \eqref{ener2}, we get (for fixed $\de$)
$$
\vv^{\de}\in L^2(0,T;W^{4,2}(\R^3)^3).
$$
Hence, we see that we can set  $\bpsi:=-\triangle \vv^{\de}$ in \eqref{weak2}.  As the result, we obtain (after integration by parts and also using \eqref{A1})
\beq
\label{test-Lap1}
\frac{1}{2}\frac{d}{dt}\|\na\vv^\de (t)\|_2^2+\de\|\na^3\vv^\de (t)\|_2^2+C_1\II_r(\vv^\de)\leq\cJ_1+\cJ_2,
\eeq
where
\begin{align*}
\II_r(\vv^\de):&=\Int_{\R^3}\,\left(1+|\dd(\vv^\de)|^2\right)^{\frac{r-2}{2}}\,|\dd(\na\vv^\de)|^2\,dx,\\
\cJ_1:&=\left(\frac{\partial\bs(p^\de,\dd(\vv^\de))}{\partial p}\na p^\de,\Delta\vv^\de\right),\\
\cJ_2:&=\left(
\div\left(\vv^{\de}\otimes\vv^\de\right),\Delta\vv^\de\right).
\end{align*}
For $\cJ_2$ we use the fact that
$\div\vv^{\de}=0$  to obtain
$$
\cJ_2=-\Sum_{i,j,k}\,\Int_{\R^3}\,\partial_k(\vv^{\de})^i\partial_i(\vv^{\de})^j\partial_k(\vv^{\de})^j\,dx
\leq C\,\|\na \vv^{\de}\|_3^3.
$$
Using the assumption \eqref{A2}, we get
\begin{align*}
\cJ_1&\leq \gamma_0\Int_{\R^3}\,\left(1+|\dd(\vv^{\de})|^2\right)^{\frac{r-2}{4}}|\dd(\na\vv^{\de})||\na p^{\de}|\,dx,\\
&\leq \gamma_0\left(\II_r(\vv^{\de})\right)^{\frac12}\|\na p^{\de}\|_2.
\end{align*}
Therefore, substituting the relations for $\cJ_1$-$\cJ_2$ into \eqref{test-Lap1}, we get
\begin{equation}
\begin{split}
\label{test-Lap1.1} \frac{1}{2}\frac{d}{dt}\|\na\vv^\de
(t)&\|_2^2\!+\de \|\na^3\vv^\de (t)\|_2^2\!+C_1\II_r(\vv^\de) \le
C\|\na \vv^{\de}\|_3^3+ \gamma_0
\left(\II_r(\vv^\de)\right)^{1/2}\|\nabla p^{\de}\|_2.
\end{split}
\end{equation}

To bound the right hand side of \eqref{test-Lap1.1}, we
use the estimate \eqref{gradp}. Since, $\gamma_0<\frac{C_1}{C_1+C_2}$,
we deduce that $\frac{\gamma_0 C_2}{1-\gamma_0}<C_1$ and
therefore substituting \eqref{gradp} into \eqref{test-Lap1.1} and using the Young and Korn inequalities, we finally deduce that (see \cite{MNR} for details)

\begin{equation}
\begin{split}
\label{regular}
\frac{1}{2}\,\frac{d}{dt}\|\na\vv^{\de}\|_2^2+C\II_r(\vv^{\de})\leq C\left(\|\dd(\vv^\de)\|_3^3 + \|\dd(\vv^\de)\|_3^2\|\nabla \vv^\de\|^2_2\right).
\end{split}
\end{equation}

Having the estimate \eqref{regular} at hand we proceed in a similar way as in \cite{MNR} but now we need to be more careful  since the behavior at infinity has to be discussed.
\subsection{Step 1}
First,we recall two interpolation inequalities
\begin{align}
\|\cdot\|_3 &\le \|\cdot\|_2^{\alpha}\|\cdot\|_{3r}^{1-\alpha}\qquad &&\alpha=\frac{2(r-1)}{3r-2},\label{In1}\\
\|\cdot\|_3 &\le \|\cdot\|_r^{\beta}\|\cdot\|_{3r}^{1-\beta} \qquad &&\beta=\frac{r-1}{2}\,. \label{In2}
\end{align}
In what follows we also use the following notation
$$
\|v\|_{s,\le}^s := \int_{\{|v|\le 1\}} |v|^s \; dx, \qquad \|v\|_{s,\ge}^s := \int_{\{|v|> 1\}} |v|^s \; dx
$$
The reason why we interpolate in \eqref{In1}--\eqref{In2} into $L^{3r}$ norm is the following lemma
\begin{lem}
The following estimate holds
\begin{equation}
\II_r(\vv^\de) \ge C \|\dd(\vv^\de)\|_{3r,\ge}^r. \label{Base}
\end{equation}
\end{lem}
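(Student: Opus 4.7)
The plan is to bound $\II_r(\vv^\de)$ from below by the squared $L^2$-norm of the gradient of an auxiliary nonlinear scalar, then use the homogeneous Sobolev embedding in $\R^3$ to pass to an $L^6$-norm of that scalar, and finally extract the advertised $L^{3r}$-bound via a pointwise comparison on $\{|\dd(\vv^\de)|>1\}$. Concretely, I would introduce
\[
H:=(1+|\dd(\vv^\de)|^2)^{r/4}-1\ge 0.
\]
A direct chain-rule computation combined with the trivial bound $|\dd|^2\le 1+|\dd|^2$ shows pointwise that
\[
|\nabla H|^2\le\tfrac{r^2}{4}\,(1+|\dd(\vv^\de)|^2)^{r/2-2}|\dd(\vv^\de)|^2|\nabla\dd(\vv^\de)|^2\le C(r)\,(1+|\dd(\vv^\de)|^2)^{(r-2)/2}\,|\dd(\nabla\vv^\de)|^2,
\]
using the paper's identification $|\nabla\dd(\vv)|^2\simeq|\dd(\nabla\vv)|^2$ (the two tensors differ only by a reordering of indices). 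Integrating over $\R^3$ gives $\|\nabla H\|_{L^2(\R^3)}^2\le C(r)\,\II_r(\vv^\de)$.

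To invoke the Sobolev inequality $\|H\|_{L^6(\R^3)}\le C\|\nabla H\|_{L^2(\R^3)}$ I need $H$ to lie in the homogeneous space $D^{1,2}(\R^3)$. This is where the $\delta$-regularity $\vv^\de\in L^\infty(0,T;W^{2,2}(\R^3)^3)$ of Proposition~\ref{exis-galer} enters: it places $\dd(\vv^\de)\in W^{1,2}(\R^3)^{3\times 3}\hookrightarrow L^q(\R^3)$ for every $q\in[2,6]$. Since $r\le 2$, one has the pointwise bound $H\le C(r)|\dd(\vv^\de)|^{r/2}$ everywhere (near zero because $(1+t^2)^{r/4}-1\le C t^2\le t^{r/2}$ for $t\le 1$, and for $t\ge 1$ because $(1+t^2)^{r/4}\le C t^{r/2}$). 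With $3r\in(5,6)$ in the regime $r\in(9/5,2)$, this places $H\in L^6(\R^3)$, and the Sobolev inequality then yields
\[
\|H\|_{L^6(\R^3)}^2\le C\,\II_r(\vv^\de).
\]

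For the matching pointwise lower bound on the complementary set, the ratio $t\mapsto((1+t^2)^{r/4}-1)/t^{r/2}$ is continuous and strictly positive on $[1,\infty)$ and tends to $1$ at infinity, so it admits a positive minimum $c(r)$. Therefore $H^6\ge c(r)\,|\dd(\vv^\de)|^{3r}$ on $\{|\dd(\vv^\de)|>1\}$, and
\[
\int_{\{|\dd(\vv^\de)|>1\}}|\dd(\vv^\de)|^{3r}\,dx\le C\int_{\R^3}H^6\,dx\le C\,\II_r(\vv^\de)^3.
\]
Taking cube roots recovers $\|\dd(\vv^\de)\|_{3r,\ge}^{r}=\bigl(\int_{\{|\dd|>1\}}|\dd|^{3r}\bigr)^{1/3}\le C\,\II_r(\vv^\de)$, which is exactly \eqref{Base}.

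The only nontrivial technical point is the placement of the nonlinear scalar $H$ in the homogeneous Sobolev space $D^{1,2}(\R^3)$; in a bounded domain this would be automatic, but on the whole space I rely on the $W^{2,2}$-regularity available for $\vv^\de$ at the $\delta$-level. Everything else is either a pointwise manipulation of $(1+t^2)^{r/4}-1$ or a routine interpolation, and the inequality is uniform in $\delta$, so it will persist through the subsequent passage $\delta\to 0$.
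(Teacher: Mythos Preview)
Your argument is correct and follows essentially the same route as the paper: introduce $H=(1+|\dd(\vv^\de)|^2)^{r/4}-1$, bound $|\nabla H|^2$ pointwise by the integrand of $\II_r$, apply the Sobolev embedding $D^{1,2}(\R^3)\hookrightarrow L^6(\R^3)$, and use the elementary inequality $(1+x)^{r/4}-1\ge C\,x^{r/4}$ for $x\ge 1$ to recover $\|\dd(\vv^\de)\|_{3r,\ge}^r$. Your treatment is in fact more careful than the paper's, which simply invokes ``standard embedding'' without verifying that $H\in D^{1,2}(\R^3)$; your use of the $W^{2,2}$-regularity at the $\delta$-level to justify this step is a genuine addition.
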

\begin{proof}
Since
$$
|\nabla \left( (1+|\dd(\vv^\de)|^2)^{\frac{r}{4}}-1\right)|^2\le C(1+|\dd(\vv^\de)|^2)^{\frac{r-2}{2}}|\dd(\nabla \vv^\de)|^2
$$
we obtain from standard embedding and from the simple inequality $(1+x)^{\frac{r}{4}}-1\ge Cx^{\frac{r}{4}}$, that is  valid for some $C>0$ and all $x\ge 1$, that
$$
\||\dd(\vv^\de)|^{\frac{r}{2}}\|_{6,\ge}^2\le C \|(1+|\dd(\vv^\de)|^2)^{\frac{r}{4}}-1\|_6^2 \le C \II_r(\vv^\de).
$$
And \eqref{Base} immediately follows.
\end{proof}
To estimate the second term in the right hand side of \eqref{regular} we  compute
\begin{align*}
\|\dd(&\vv^\de)\|_3^2\|\nabla \vv^\de\|_2^2 \le \|\dd(\vv^\de)\|_{3,\le}^2\|\nabla \vv^\de\|_2^2+\|\dd(\vv^\de)\|_{3,\ge}^2\|\nabla \vv^\de\|_2^2\\
&\le C(\|\dd(\vv^\de)\|_{2,\le}^2+1)\|\nabla \vv^\de\|_2^2 + \|\dd(\vv^\de)\|_{3,\ge}^{2}\|\nabla \vv^\de\|_2^2\\
&\overset{\eqref{In1},\eqref{In2}}\le C(\|\dd(\vv^\de)\|_{2,\le}^2+1)\|\nabla \vv^\de\|_2^2 + \|\dd(\vv^\de)\|_{3r,\ge}^{2(1-\beta)}\|\dd(\vv^\de)\|_{r,\ge}^{2\beta}\|\nabla \vv^\de\|_2^{2}\\
&\overset{\eqref{Base}}\le
C(\|\dd(\vv^\de)\|_{2,\le}^2+1)\|\nabla \vv^\de\|_2^2 +
\left(\II_r(\vv^\de)\right)^{\frac{2(1-\beta)}{r}}\|\dd(\vv^\de)\|_{r,\ge}^{2\beta}\|\nabla
\vv^\de\|_2^{2}.
\end{align*}
Applying the Young inequality, we conclude that
\begin{align*}
\|\dd(\vv^\de)\|_3^2\|\nabla \vv^\de\|_2^2 &\le
C(\|\dd(\vv^\de)\|_{2,\le}^2+1)\|\nabla \vv^\de\|_2^2 + \delta
\II_r(\vv^\de)\\
&\qquad  + C\|\dd(\vv^\de)\|_{r,\ge}^{r+{\frac {\left
(2-r\right )r}{2r-3}}}\|\nabla \vv^\de\|_2^{\frac{2r}{2r-3}}.
\end{align*}
Finally, using the fact that $\|\dd (\vv^\de)\|_{r,\ge}\le \|\nabla \vv^\de\|_2^{\frac{2}{r}}$ we conclude
\begin{equation}
\begin{split}
\|\dd(\vv^\de)\|_3^2\|\nabla \vv^\de\|_2^2 &\le
C(\|\dd(\vv^\de)\|_{2,\le}^2+1)\|\nabla \vv^\de\|_2^2 + \delta
\II_r(\vv^\de) \\
&\quad + C\|\dd(\vv^\de)\|_{r,\ge}^{r}\|\nabla
\vv^\de\|_2^{\frac{4}{2r-3}}.\label{MB2}
\end{split}
\end{equation}

Next, for fixed $\mu\in(0,1)$ we estimate the first term on the right hand side of \eqref{regular} similarly. Hence
\begin{align*}
\|\dd(&\vv^\de)\|_3^3\le C\|\nabla \vv^\de\|_2^2 + \|\dd(\vv^\de)\|_{3,\ge}^{3\mu}\|\dd(\vv^\de)\|_{3,\ge}^{3(1-\mu)}\\
&\overset{\eqref{In1},\eqref{In2}}\le \|\nabla \vv^\de\|_2^2 + \|\dd(\vv^\de)\|_{3r,\ge}^{3\mu(1-\alpha)+3(1-\mu)(1-\beta)}\|\dd(\vv^\de)\|_{r,\ge}^{3(1-\mu)\beta}\|\nabla \vv^\de\|_2^{3\mu\alpha}\\
&\le \|\nabla \vv^\de\|_2^2 + \left(\II_r(\vv^\de)\right)^{\frac{3\mu(1-\alpha)+3(1-\mu)(1-\beta)}{r}}\|\dd(\vv^\de)\|_{r,\ge}^{3(1-\mu)\beta}\|\nabla \vv^\de\|_2^{3\mu\alpha}.
\end{align*}
Finally, applying the Young inequality we observe that
\begin{align}
\|\dd(\vv^\de)\|_3^3 &\le \|\nabla \vv^\de\|_2^2 + \delta
\II_r(\vv^\de) + C\|\dd(\vv^\de)\|_{r,\ge}^A \|\nabla
\vv^\de\|_2^B \label{MB3}
\end{align}
with $A,B$ given as
\begin{equation}
\begin{split}
A&:=3{\frac {\left (r-1\right )\left (-1+\mu\right )\left (3\,r-2\right
)r}{-15\,{r}^{2}+37\,r-27\,\mu\,r-18+18\,\mu+9\,\mu\,{r}^{2}}},\\
B&:=-12{\frac {\mu r\left (r-1\right )}{-15{r}^{2}+37r-27\,\mu r-
18+18 \mu+9\mu {r}^{2}}}\,.
\end{split}\label{MB22}
\end{equation}
Next, setting (note that for $r\in (\frac95, 2)$, $\mu\in (0,1)$)
\begin{equation}
\mu:=-{\frac {3{r}^{2}-11r+6}{6(r-1)}}, \label{mu}
\end{equation}
we observe that $A=r$ and
\begin{equation}
B=\frac{4(3-r)}{3r-5}. \label{B}
\end{equation}
Since $B\ge \frac{4}{2r-3}$ for all $r\in(1,2)$ we obtain by combining \eqref{MB2}, \eqref{MB3} and \eqref{regular} that
\begin{equation}
\frac{d}{dt} \|\nabla \vv^\de\|_2^2 + C \II_r(\vv^\de) \le C(1+\|\dd(\vv^\de)\|_{r,\ge}^r + \|\dd(\vv^\de)\|_{2,\le}^2) (1+\|\nabla \vv^\de\|_2^2)^\lambda \label{MB4}
\end{equation}
with
\begin{equation}
\lambda:=\frac{2(3-r)}{3r-5}. \label{lam}
\end{equation}
Finally, dividing \eqref{MB4} by $(1+\|\nabla \vv^\de\|_2^2)^{\lambda}$, integrating w.r.t. time and using the estimate \eqref{ener-bound}, we finally deduce (see \cite{MNRR} for details)
\begin{equation}
\int_0^T \II_r(\vv^\de)(1+\|\nabla \vv^\de\|_2^2)^{-\lambda} \; dt\le C\,. \label{MB6}
\end{equation}
\subsection{Step 2} Here, we deduce how \eqref{MB6} implies local compactness of $\dd(\vv^\de)$.

\begin{lem}
There is  $\gamma\in (0,1)$ such that
\begin{equation}
\int_0^T \left(\II_r(\vv^\de)\right)^{\gamma} \; dt \le C. \label{comp}
\end{equation}
\end{lem}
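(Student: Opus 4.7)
The plan is to combine the weighted estimate \eqref{MB6} with H\"older's inequality in time, and then to absorb the resulting weight back into $\II_r(\vv^\de)^\gamma$ via the Sobolev-type bound \eqref{Base}. For any $\gamma\in(0,1)$, H\"older's inequality in $t$ with exponents $1/\gamma$ and $1/(1-\gamma)$ gives
\begin{equation*}
\int_0^T \II_r(\vv^\de)^\gamma \, dt
\le \left(\int_0^T \frac{\II_r(\vv^\de)}{(1+\|\nabla\vv^\de\|_2^2)^{\lambda}}\, dt\right)^{\gamma} \left(\int_0^T (1+\|\nabla\vv^\de\|_2^2)^{\gamma\lambda/(1-\gamma)}\, dt\right)^{1-\gamma}.
\end{equation*}
The first factor is controlled by \eqref{MB6}, so with $\alpha:=\gamma\lambda/(1-\gamma)$ everything reduces to estimating the second factor.

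Since $\div\vv^\de=0$, integration by parts yields the identity $\|\nabla\vv^\de\|_2^2=2\|\dd(\vv^\de)\|_2^2$, which I split as $\|\dd(\vv^\de)\|_{2,\le}^2+\|\dd(\vv^\de)\|_{2,\ge}^2$. The small-value part is bounded in $L^1(0,T)$ by the energy estimate \eqref{ener-bound}. On the large-value set I use the H\"older interpolation
\begin{equation*}
\|\dd(\vv^\de)\|_{2,\ge}^2 \le \|\dd(\vv^\de)\|_{r,\ge}^{2(1-\theta)} \, \|\dd(\vv^\de)\|_{3r,\ge}^{2\theta}, \qquad \theta:=\frac{3(2-r)}{4},
\end{equation*}
coming from $\frac12=\frac{1-\theta}{r}+\frac{\theta}{3r}$, together with \eqref{Base} rewritten as $\|\dd(\vv^\de)\|_{3r,\ge}^{2\theta}\le C\,\II_r(\vv^\de)^{2\theta/r}$; this is the crucial link between $\|\nabla\vv^\de\|_2^2$ and $\II_r(\vv^\de)$.

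Raising the combined bound for $\|\nabla\vv^\de\|_2^2$ to the power $\alpha$, integrating in $t$, and applying H\"older in time with conjugate exponents $p,p'$ selected so that $2\alpha(1-\theta)p=r$ and $2\alpha\theta p'/r=\gamma$, the two resulting time integrals become exactly $\int_0^T\|\dd(\vv^\de)\|_{r,\ge}^r\,dt$ (bounded by \eqref{ener-bound}) and $\int_0^T\II_r(\vv^\de)^\gamma\,dt$. Substituting back produces the self-referential estimate
\begin{equation*}
\int_0^T \II_r(\vv^\de)^\gamma \, dt \le C + C\left(\int_0^T \II_r(\vv^\de)^\gamma \, dt\right)^{(1-\gamma)/p'},
\end{equation*}
which is closed by Young's inequality as soon as $(1-\gamma)/p'<1$. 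The compatibility requirement $1/p+1/p'=1$ forces $\gamma=(r-2\lambda\theta)/(r+2\lambda(1-\theta))$, and a direct computation then gives $(1-\gamma)/p'=2\lambda\theta/r$. The main obstacle is the book-keeping step of verifying simultaneously $\gamma\in(0,1)$ and $2\lambda\theta/r<1$; plugging in \eqref{lam} and the definition of $\theta$, both requirements reduce to the single arithmetic inequality $r>9/5$, which is exactly the hypothesis of Theorem~\ref{MAIN}, and Young's inequality then yields \eqref{comp}.
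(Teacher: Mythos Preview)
Your argument is correct and follows essentially the same route as the paper: both proofs insert the weight $(1+\|\nabla\vv^\de\|_2^2)^{\pm\gamma\lambda}$, invoke \eqref{MB6}, control the weight by the same $L^r$--$L^{3r}$ interpolation combined with \eqref{Base}, and close with Young/H\"older. The only cosmetic difference is that you apply H\"older in $t$ globally and track the exponents to a self-referential inequality, whereas the paper applies Young pointwise in $t$ and then says ``choose $\gamma$ small enough''; your version has the merit of making explicit that the closing condition $2\lambda\theta/r<1$ is exactly $r>9/5$.
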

\begin{proof}
We can compute
\begin{align*}
\left(\II_r(\vv^\de)\right)^{\gamma} &=\left(\II_r(\vv^\de)\right)^{\gamma}(1+\|\nabla \vv^\de\|_2^2)^{-\gamma\lambda}(1+\|\nabla \vv^\de\|_2^2)^{\gamma\lambda}\\
&\le C \left(\II_r(\vv^\de)\right)^{\gamma}(1+\|\nabla \vv^\de\|_2^2)^{-\gamma\lambda}(1+\|\dd(\vv^\de)\|_{2,\le}^2 + \|\dd( \vv^\de)\|_{2,\ge}^2)^{\gamma\lambda}\\
\intertext{and by using interpolation inequality $\|\cdot\|_2\le \|\cdot\|_r^{\frac{3r-2}{4}}\|\cdot\|_{3r}^{\frac{3(2-r)}{4}}$ and \eqref{Base} we deduce that}
\left(\II_r(\vv^\de)\right)^{\gamma}&\le
C \left(\II_r(\vv^\de)\right)^{\gamma}(1+\|\nabla \vv^\de\|_2^2)^{-\gamma\lambda}(1+\|\dd(\vv^\de)\|_{2,\le}^2
\\&\qquad+ \|\dd( \vv^\de)\|_{r,\ge}^{\frac{3r-2}{2}} \left(\II_r(\vv^\de)\right)^{\frac{3(2-r)}{2}})^{\gamma\lambda}\,.\\
\intertext{Using the Young inequality with coefficients
$\frac{1}{\gamma}$ and $\frac{1}{1-\gamma}$, we continue
as}\left(\II_r(\vv^\de)\right)^{\gamma} &\le C
\frac{\II_r(\vv^\de)}{(1+\|\nabla \vv^\de\|_2^2)^{\lambda}}+
C(1+\|\dd(\vv^\de)\|_{2,\le}^{\frac{2\gamma\lambda}{1-\gamma}})
\\
&\qquad+ C\|\dd(
\vv^\de)\|_{r,\ge}^{\frac{3r-2}{2}\frac{\gamma\lambda}{1-\gamma}}
\left(\II_r(\vv^\de)\right)^{\frac{3(2-r)}{2r}\frac{\gamma\lambda}{1-\gamma}}\,.
\end{align*}
Finally, applying once again the Young inequality onto the last term and integrating w.r.t. time we see that we can choose $\gamma$ so small that by using \eqref{MB6} and \eqref{ener-bound} we obtain \eqref{comp}.
\end{proof}
The next lemma finally gives the desired compactness of the velocity gradient.
\begin{lem}
The following estimate holds
\begin{equation}
\int_0^T \left(\int_{B(0,R)}|\nabla^2 \vv^\de|^r\; dx \right)^{\gamma}\; dt \le C(R)\,.
\label{MB8}
\end{equation}
\end{lem}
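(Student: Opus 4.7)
The strategy is to reduce the local $L^r$ estimate on $\nabla^2 \vv^\de$ to a bound on the weighted quantity $\II_r(\vv^\de)$, whose time-integrated control has already been secured in \eqref{comp}.

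I would first invoke the pointwise algebraic identity
\begin{equation*}
\partial_i \partial_j (\vv^\de)^k = \partial_j \dd(\vv^\de)_{ik} + \partial_i \dd(\vv^\de)_{jk} - \partial_k \dd(\vv^\de)_{ij},
\end{equation*}
which holds a.e.\ because Proposition \ref{exis-galer} gives $\vv^\de \in L^\infty(0,T;W^{2,2}(\R^3)^3)$. Consequently $|\nabla^2 \vv^\de|\le 3|\nabla \dd(\vv^\de)|$ a.e., so it suffices to bound $\int_{B(0,R)}|\nabla \dd(\vv^\de)|^r\,dx$. I would then factor
\begin{equation*}
|\nabla \dd(\vv^\de)|^r = \bigl[(1+|\dd(\vv^\de)|^2)^{\frac{r-2}{2}}|\nabla \dd(\vv^\de)|^2\bigr]^{r/2}\,(1+|\dd(\vv^\de)|^2)^{\frac{r(2-r)}{4}}
\end{equation*}
and apply H\"older's inequality in space with the dual exponents $2/r$ and $2/(2-r)$ to obtain
\begin{equation*}
\int_{B(0,R)} |\nabla \dd(\vv^\de)|^r\,dx \le \bigl(\II_r(\vv^\de)\bigr)^{r/2} \left(\int_{B(0,R)}(1+|\dd(\vv^\de)|^2)^{r/2}\,dx\right)^{\frac{2-r}{2}}.
\end{equation*}

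Raising this to the power $\gamma$ from \eqref{comp} and applying Young's inequality with the same exponents $2/r$, $2/(2-r)$ gives the pointwise-in-$t$ bound
\begin{equation*}
\left(\int_{B(0,R)}|\nabla \dd(\vv^\de)|^r\,dx\right)^\gamma \le C\,\bigl(\II_r(\vv^\de)\bigr)^\gamma + C \left(\int_{B(0,R)}(1+|\dd(\vv^\de)|^2)^{r/2}\,dx\right)^\gamma.
\end{equation*}
Integrating in time, the first summand is controlled by \eqref{comp}. For the second, the elementary estimate $(1+|\dd|^2)^{r/2}\le C(1+|\dd|^r)$ combined with the energy bound \eqref{ener-bound} and the pointwise inequality $|\dd|^r \le C(1+\phi(|\dd|))$ (which is checked separately on $\{|\dd|\le 1\}$ and $\{|\dd|>1\}$) yields a uniform-in-$\de$ bound on $\int_0^T\int_{B(0,R)}(1+|\dd(\vv^\de)|^2)^{r/2}\,dx\,dt$. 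Since $\gamma<1$, H\"older's inequality in time then gives $\int_0^T(\int_{B(0,R)}\cdots\,dx)^\gamma\,dt \le C(R)$, which completes \eqref{MB8}.

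The substantive input, namely the time-integrability of $\bigl(\II_r(\vv^\de)\bigr)^\gamma$, has already been extracted in the preceding lemma \eqref{comp}; the remaining work is weighted-H\"older bookkeeping. The only point that needs care is to line up the exponents so that the weight $(1+|\dd|^2)^{(r-2)/2}$ is absorbed exactly into $\II_r$, while the residual power $(1+|\dd|^2)^{r/2}$ sits at the precise exponent that is controlled locally by the energy estimate through $\phi$.
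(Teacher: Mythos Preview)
Your proof is correct and follows essentially the same route as the paper: factor $|\nabla \dd(\vv^\de)|^r$ so that the weighted piece becomes $\II_r(\vv^\de)$, control the residual $(1+|\dd|^2)^{r/2}$ locally via the energy estimate, raise to the power $\gamma$, and integrate in time using \eqref{comp}. The only cosmetic differences are that the paper applies Young's inequality pointwise in space (rather than H\"older followed by Young after the $\gamma$-power) and leaves implicit the pointwise identity $\partial_i\partial_j v^k = \partial_j \dd_{ik}+\partial_i \dd_{jk}-\partial_k \dd_{ij}$ that you spell out.
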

\begin{proof}
First, we can deduce that
\begin{align*}
&\int_{B(0,R)}|\dd (\nabla \vv^\de)|^r\; dx \\
&\qquad \le \int_{B(0,R)}\left(
(1+|\dd(\vv^\de)|^2)^{\frac{r-2}{2}}|\dd(\nabla \vv^\de)|^2
\right)^{\frac{r}{2}}(1+|\dd(\vv^\de)|^2)^{\frac{r(2-r)}{4}}\;
dx\,.
\end{align*}
Using the Young inequality, we deduce
\begin{align*}
\int_{B(0,R)}|\dd (\nabla \vv^\de)|^r\; dx &\le \int_{B(0,R)}
(1+|\dd(\vv^\de)|^2)^{\frac{r-2}{2}}|\dd(\nabla \vv^\de)|^2 \;
dx \\
&\qquad+ C\int_{B(0,R)} \big(1+|\dd(\vv^\de)|^r\big)\; dx.
\end{align*}
Finally, using $\gamma$-power, integrating w.r.t. time and using \eqref{comp}, \eqref{ener-bound} and the fact that we integrate over the domain with finite measure, we obtain \eqref{MB8}.
\end{proof}

Finally, using the standard interpolation, \eqref{ener-bound}, \eqref{MB8} and the Young inequality, we conclude
$$
\int_0^T\|\vv^\de\|_{1+\sigma,r;B(0,R)}^r\le \int_0^T\|\vv^\de\|_{1,r;B(0,R)}^{r(1-\gamma)}\|\vv^\de\|_{2,r;B(0,R)}^{r\gamma} \le C,
$$
provided that $\sigma \ll 1$ is sufficiently small. Using the compact embedding $W^{1+\sigma,r}$ into $W^{1,r}$ we then conclude (up to subsequence) that
$$
\dd(\vv^\de) \to \dd(\vv) \quad \textrm{a.e. in } (0,T)\times\mathbb{R}^3.
$$
We skip the details here and we refer to \cite{MNRR, MNR} for a complete presentation. Consequently, we may use Proposition \ref{Ppres} and to finally deduce that
$$
p^\de \to p \quad \textrm{a.e. in } (0,T)\times\mathbb{R}^3,
$$
which finishes the proof of Theorem \ref{MAIN}.



\end{document}